\newcommand{\Q}{\mathbb{Q}}
\newtheorem{theorem}{Theorem}[section]
\newtheorem{corollary}[theorem]{Corollary}
\newtheorem{conjecture}[theorem]{Conjecture}
\newtheorem{proposition}[theorem]{Proposition}
\newtheorem*{theorem*}{Theorem}
\newtheorem*{corollary*}{Corollary}
\theoremstyle{definition}
\newtheorem{defn}[theorem]{Definition}
\newtheorem{remark}[theorem]{Remark}
\numberwithin{equation}{section}
\author{Houari Benammar Ammar}
\title{Remarks on the Direct Image Sheaf of Logarithmic Pluricanonical Bundles and the Non-Vanishing Conjecture}
\begin{document}
\normalsize
\maketitle
\begin{abstract}
By applying the Chen-Jiang decomposition, we prove that the non-vanishing conjecture holds for an lc pair \((X, \Delta)\), where \(X\) is an irregular variety, provided it holds for lower-dimensional varieties. In the second part, we extend the Catanese-Fujita-Kawamata decomposition to the klt case \((X, \Delta)\), which leads to the existence of sections of \(K_X + \Delta\) in certain situations.
\end{abstract}
\section{Introduction}
One crucial step in Mori’s MMP program is to prove the non-vanishing conjecture; however, it is the first step towards proving the abundance conjecture. 
\begin{conjecture}\label{nonvanishingconjecture}
    Let $(X,\Delta)$ be a lc pair. If $D \sim_{\Q} m(K_{X} + \Delta)$ is Cartier  pseudo-effective, then $\kappa(D) \geq 0$.
\end{conjecture}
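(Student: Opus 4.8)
The plan is to prove Conjecture~\ref{nonvanishingconjecture} by induction on $\dim X$, the base cases $\dim X\le 1$ being classical (on a curve a pseudo-effective Cartier divisor $\Q$-linearly equivalent to a multiple of $K_X+\Delta$ either has degree $0$, and is then forced to be $\Q$-trivial, or has positive degree, and then a multiple is effective). For the inductive step the decisive dichotomy is the irregularity of $X$: after replacing $(X,\Delta)$ by a log resolution (harmless for the non-vanishing statement), write $a\colon X\to A$ for the Albanese morphism; either $q(X)=0$ and $a$ is trivial, or $q(X)>0$ and $\dim a(X)\ge 1$.

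\textbf{The irregular case.} Fix $m$ with $m(K_X+\Delta)$ Cartier and a pseudo-effective $D\sim_{\Q} m(K_X+\Delta)$. The first goal is to show $a_{*}\mathcal{O}_X(kD)\ne 0$ for some $k\ge 1$. If $a$ is generically finite onto its image this is automatic, since $a_{*}\mathcal{O}_X(m(K_X+\Delta))$ then has generic rank $\deg a\ge 1$. Otherwise the general fibre $F$ of $a$ has positive dimension; by adjunction $(F,\Delta|_F)$ is an lc pair with $(K_X+\Delta)|_F\sim_{\Q} K_F+\Delta|_F$, the restriction $D|_F$ is Cartier, and $D|_F$ is again pseudo-effective (a covering family of curves on $F$ violating this would, as $F$ varies, sweep out $X$ and contradict pseudo-effectivity of $D$). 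Since $\dim F<\dim X$, the inductive hypothesis gives $\kappa(D|_F)\ge 0$, hence $h^0(F,kD|_F)>0$ for some $k$ independent of the general fibre, so $a_{*}\mathcal{O}_X(kD)\ne 0$.

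Next, apply the Chen--Jiang decomposition to this twist of a direct image of a pluri-log-canonical sheaf: $a_{*}\mathcal{O}_X(kD)\cong\bigoplus_i \big(p_i^{*}\mathcal{M}_i\otimes Q_i\big)$, where $p_i\colon A\to A_i$ is a quotient of abelian varieties, $\mathcal{M}_i$ is a nonzero M-regular coherent sheaf on $A_i$ (so $h^0(A_i,\mathcal{M}_i)>0$, as M-regular sheaves are continuously globally generated), and $Q_i\in\Pic^0(A)$ is torsion, of order $N_i$, say. Picking one such summand and using the projection formula, $h^0\big(X,kD\otimes a^{*}Q_i^{-1}\big)=h^0\big(A,a_{*}\mathcal{O}_X(kD)\otimes Q_i^{-1}\big)\ge h^0(A,p_i^{*}\mathcal{M}_i)=h^0(A_i,\mathcal{M}_i)>0$; taking the $N_i$-th tensor power of such a section produces a section of $kN_iD$, so $\kappa(D)\ge 0$.

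\textbf{Main obstacle.} The genuinely hard case — and the one I do not expect these ideas to reach — is $q(X)=0$: with no fibration onto a positive-dimensional base the induction has nothing to act on, and a pseudo-effective $m(K_X+\Delta)$ need not be effective, semiample, or even nef a priori, so one is thrown back on MMP/abundance-type input (running an MMP to make $K_X+\Delta$ nef and then arguing is essentially circular at present). Even within the irregular case two technical points need care: the Chen--Jiang decomposition must be made available for $a_{*}\mathcal{O}_X(kD)$ when $(X,\Delta)$ is only lc rather than klt (which seems to require passing through a dlt/crepant modification or a perturbation argument, since a log resolution of an lc pair is still only lc), and the exponent $k$ above must be checked to be uniform over the general fibres of $a$ so that $a_{*}\mathcal{O}_X(kD)\ne0$ genuinely holds.
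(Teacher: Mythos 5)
The statement you were asked to prove is Conjecture~\ref{nonvanishingconjecture}, and the paper does not prove it: it only establishes the conditional reductions of Theorem~\ref{maintheorem1} and Corollary~\ref{maintheorem2}, namely that the conjecture holds for an lc pair with $q(X)>0$ \emph{provided} it holds in lower dimensions. Your proposal is, in substance, exactly that reduction plus an honest admission that the induction cannot be closed. The irregular case you write out follows the same route as the paper's proof of Theorem~\ref{maintheorem1}: produce $a_{*}\mathcal{O}_X(kD)\neq 0$ either from generic finiteness of the Albanese map or from non-vanishing on the general fibre via the inductive hypothesis, invoke the Chen--Jiang decomposition of \cite{jiang} and \cite{fanjunmeng}, use that M-regular sheaves are continuously globally generated (hence have sections by semi-continuity), pull back through the projection formula, and kill the torsion twist by raising the section to the order of the torsion line bundle. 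You even handle two points the paper elides, namely the pseudo-effectivity of $D|_F$ and the uniformity of the exponent $k$ over general fibres; and your remark that the Chen--Jiang decomposition is only available in the klt case is precisely why the paper routes the lc case through the canonical bundle formula of Theorem~\ref{canonicalformula}, perturbing the resulting polarized dlt pair on the base $\tilde{Y}$ to a klt pair before applying Theorem~\ref{maintheorem1}; your suggested ``dlt modification or perturbation'' is the right idea but you would need that specific mechanism to make the lc case work.

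The genuine gap is the one you name yourself: the case $q(X)=0$. Without a nontrivial Albanese morphism there is no fibration for the induction to act on, no Chen--Jiang decomposition to exploit, and no known replacement; this is exactly why the statement is a conjecture rather than a theorem, in this paper and elsewhere. So your proposal should not be read as a proof of Conjecture~\ref{nonvanishingconjecture} but as a correct reconstruction of the paper's partial results together with an accurate identification of the obstruction that neither you nor the paper overcomes.
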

\par Thanks to recent advances in techniques for Generic Vanishing Theory, developed by many authors \cite{Haconreine}, \cite{popapareschi1},   \cite{popasurvey}, \cite{popapareschi} and originally introduced in \cite{Greenlazarsfeld} and \cite{simpson}, we have gained significant insights into irregular varieties. These advances have also deepened our understanding of the structure of the pushforward of logarithmic pluricanonical bundles under a map from an irregular variety to an abelian variety. 
\par The following theorem and corollary are deduced in \cite[Lemma 3.5]{Hu}, with the author’s proof based on \cite[Theorem 4.1]{BirkarChen}. In their work, Birkar and Chen require the pair to be relatively of log general type to run the MMP, as shown in \cite{BCHM}, and to prove the abundance conjecture. They also rely on the stronger generic vanishing result from \cite{Haconreine} to establish their proof. In this note, a simple and short proof is provided.

\begin{theorem}\label{maintheorem1}
Let \((X, \Delta)\) be a klt pair such that \(D \sim_{\mathbb{Q}} m(K_X + \Delta)\) is a Cartier pseudo-effective divisor, with \(q(X) > 0\). If the non-vanishing conjecture \ref{nonvanishingconjecture} holds for lower-dimensional klt pairs, then it also holds for \((X, \Delta)\).
\end{theorem}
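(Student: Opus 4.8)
\emph{Strategy.} The plan is to pass to a smooth birational model, use the Albanese morphism $a\colon X\to A$ (non-trivial because $q(X)>0$), and feed the pushforward $a_{*}\mathcal O_{X}\big(\ell(K_{X}+\Delta)\big)$ into the Chen--Jiang decomposition. The lower-dimensional hypothesis will be used at exactly one place, and this is the crux of the argument: it is needed to guarantee that this pushforward sheaf is non-zero. Once that is known, the Chen--Jiang decomposition produces a non-zero global section of $\ell(K_{X}+\Delta)$ twisted by a torsion line bundle pulled back from $A$, and since such a twist leaves the Kodaira dimension unchanged, $\kappa(K_{X}+\Delta)=\kappa(D)\ge 0$ follows at once.

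\emph{Step 1 (reduction to $X$ smooth).} I would first take a log resolution $\mu\colon X'\to X$ and write $K_{X'}+\Delta'=\mu^{*}(K_{X}+\Delta)+E$ with $(X',\Delta')$ klt, $\Delta'\ge 0$ of simple normal crossing support, and $E\ge 0$ $\mu$-exceptional. Then $q(X')=q(X)>0$ (klt singularities are rational), and since $E$ is effective and $\mu$-exceptional one has $\kappa(\mu^{*}D+mE)=\kappa(D)$, while $\mu^{*}D+mE\sim_{\Q}m(K_{X'}+\Delta')$; as the Kodaira dimension of a $\Q$-divisor depends only on its $\Q$-linear-equivalence class, it suffices to prove $\kappa(K_{X'}+\Delta')\ge 0$. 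After renaming, I may assume $X$ is smooth projective, $\Delta$ has simple normal crossing support, $K_{X}+\Delta$ is pseudo-effective, and I fix $r\ge 1$ with $r(K_{X}+\Delta)$ Cartier.

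\emph{Step 2 (restriction to a general fibre; non-vanishing of the pushforward).} Set $a\colon X\to A:=\mathrm{Alb}(X)$, so $\dim A=q(X)>0$ and, $a(X)$ generating $A$, $\dim a(X)\ge 1$. Factor $X\to a(X)$ through its Stein factorization $X\xrightarrow{f}Y\xrightarrow{h}a(X)$ ($f$ a fibration, $h$ finite) and let $F$ be a general fibre of $f$, so $\dim F<\dim X$. Then $F$ is smooth, $(F,\Delta|_{F})$ is klt, $(K_{X}+\Delta)|_{F}=K_{F}+\Delta|_{F}$ by adjunction, and $r(K_{F}+\Delta|_{F})=r(K_{X}+\Delta)|_{F}$ is Cartier and pseudo-effective (pseudo-effectivity holds for very general $F$, since the restriction of a pseudo-effective class to a very general fibre is a limit of restrictions of effective divisors). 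The non-vanishing conjecture in dimension $<\dim X$ therefore applies to $(F,\Delta|_{F})$ and gives $\kappa(K_{F}+\Delta|_{F})\ge 0$. From this I would deduce that $\mathcal F:=a_{*}\mathcal O_{X}\big(\ell(K_{X}+\Delta)\big)\neq 0$ for some $\ell$ divisible by $r$: this is clear when $\dim F=0$, and if $\dim F>0$ and $\mathcal F$ vanished for all such $\ell$, then $H^{0}\big(f^{-1}(y),\ell(K_{X}+\Delta)|_{f^{-1}(y)}\big)=0$ for $y$ in a dense open $U_{\ell}\subseteq Y$, and a point $y_{0}$ in the intersection of the $U_{\ell}$ with the locus of good general fibres (nonempty since $\mathbb C$ is uncountable) would give a fibre $F_{0}$ with $\kappa(K_{F_{0}}+\Delta|_{F_{0}})=-\infty$, contradicting the previous sentence.

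\emph{Step 3 (conclusion via the Chen--Jiang decomposition).} Fix $\ell$ with $\mathcal F\neq 0$. Applying the Chen--Jiang decomposition to the pushforward $\mathcal F$ of the pluricanonical sheaf of the klt pair $(X,\Delta)$ along $a$, one gets $\mathcal F\simeq\bigoplus_{i}\big(\mathrm{pr}_{i}^{*}\mathcal F_{i}\otimes\beta_{i}\big)$, with $\mathrm{pr}_{i}\colon A\twoheadrightarrow A_{i}$ surjections of abelian varieties with connected fibres, $\mathcal F_{i}$ non-zero $M$-regular sheaves on $A_{i}$, and $\beta_{i}\in\mathrm{Pic}^{0}(A)$ torsion. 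Since $\mathcal F\neq 0$ some summand occurs; a non-zero $M$-regular sheaf $\mathcal F_{i}$ has $h^{0}(A_{i},\mathcal F_{i}\otimes\gamma)>0$ for general $\gamma\in\mathrm{Pic}^{0}(A_{i})$, hence for some torsion $\gamma$, so with $\beta:=\mathrm{pr}_{i}^{*}\gamma\otimes\beta_{i}^{-1}$ (again torsion), the projection formula and $(\mathrm{pr}_{i})_{*}\mathcal O_{A}=\mathcal O_{A_{i}}$ give
\[
h^{0}\big(X,\ell(K_{X}+\Delta)+a^{*}\beta\big)=h^{0}(A,\mathcal F\otimes\beta)\ \ge\ h^{0}\big(A,\mathrm{pr}_{i}^{*}(\mathcal F_{i}\otimes\gamma)\big)=h^{0}(A_{i},\mathcal F_{i}\otimes\gamma)>0 .
\]
As $a^{*}\beta$ is torsion in $\mathrm{Pic}^{0}(X)$, $\ell(K_{X}+\Delta)+a^{*}\beta\sim_{\Q}\ell(K_{X}+\Delta)$, whence $\kappa(D)=\kappa(K_{X}+\Delta)\ge 0$. (If one wishes to avoid the full decomposition here, it is enough to know that a non-zero $GV$-sheaf on an abelian variety has non-empty cohomological support locus, which by the Green--Lazarsfeld--Simpson structure theorem is a finite union of torsion translates of subtori, hence meets the torsion points.)
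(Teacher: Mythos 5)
Your proposal is correct and follows essentially the same route as the paper: Stein factorization of the Albanese map, the lower-dimensional hypothesis to guarantee the pushforward is non-zero, the Chen--Jiang decomposition for klt pairs, a section of an $M$-regular summand, and the torsion-twist trick to conclude $\kappa(D)\ge 0$. Your Step 1 resolution and the careful treatment of very general fibres are harmless refinements that the paper omits (it applies the klt Chen--Jiang decomposition directly), but they do not change the argument.
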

\par By an easy application of certain forms of canonical bundle formula \cite{haconxu} due to \cite{fujinomori},  we obtain the following corollary.
\begin{corollary}\label{maintheorem2}
   Let \((X, \Delta)\) be an lc pair such that \(D \sim_{\mathbb{Q}} m(K_X + \Delta)\) is a Cartier pseudo-effective divisor, with \(q(X) > 0\). If Conjecture \ref{nonvanishingconjecture} holds for lower-dimensional varieties, then it also holds for \((X, \Delta)\).
\end{corollary}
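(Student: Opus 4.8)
The plan is to reduce the lc statement to the klt case of Theorem~\ref{maintheorem1} by means of a dlt modification, invoking the canonical bundle formula only to dispose of the reduced part of the boundary. First I would pass to a \(\Q\)-factorial dlt modification \(\pi\colon(Y,\Gamma)\to(X,\Delta)\), which exists by \cite{BCHM}: then \(K_Y+\Gamma=\pi^{*}(K_X+\Delta)\), the divisor \(\pi^{*}D\sim_{\Q}m(K_Y+\Gamma)\) is again Cartier and pseudo-effective with \(\kappa(D)=\kappa(Y,K_Y+\Gamma)\), and \(q(Y)=q(X)>0\) since the irregularity is a birational invariant. If \(\lfloor\Gamma\rfloor=0\) then \((Y,\Gamma)\) is klt, and Theorem~\ref{maintheorem1} applies at once, because non-vanishing for all lower-dimensional varieties in particular gives it for lower-dimensional klt pairs, which is all that Theorem~\ref{maintheorem1} requires.

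So suppose \(\lfloor\Gamma\rfloor\neq 0\) and consider the klt pair \((Y,\{\Gamma\})\), together with the identity \(m(K_Y+\Gamma)=m(K_Y+\{\Gamma\})+m\lfloor\Gamma\rfloor\). If \(K_Y+\{\Gamma\}\) is pseudo-effective, then Theorem~\ref{maintheorem1} applied to \((Y,\{\Gamma\})\), which still satisfies \(q(Y)>0\), gives \(\kappa(Y,K_Y+\{\Gamma\})\ge 0\), and adding the effective divisor \(m\lfloor\Gamma\rfloor\) gives \(\kappa(D)\ge 0\); this branch uses nothing beyond Theorem~\ref{maintheorem1}. If \(K_Y+\{\Gamma\}\) is not pseudo-effective, I would run the \((K_Y+\{\Gamma\})\)-MMP, which by \cite{BCHM} terminates with a Mori fibre space \(g\colon Y'\to Z\), \(\dim Z<\dim Y\); the pseudo-effective class \(K_Y+\Gamma\) is carried to a pseudo-effective class \(K_{Y'}+\Gamma'\), and since \(-(K_{Y'}+\{\Gamma'\})\) is \(g\)-ample while the relative Picard number is \(1\), the reduced divisor \(\lfloor\Gamma'\rfloor\) is \(g\)-horizontal and \(K_{Y'}+\Gamma'\) is either numerically trivial over \(Z\) or big over \(Z\). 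In the first case the canonical bundle formula of \cite{fujinomori}, in the forms recorded in \cite{haconxu}, yields \(K_{Y'}+\Gamma'\sim_{\Q}g^{*}(K_Z+B_Z+M_Z)\) with \((Z,B_Z)\) an lc pair of dimension \(<\dim X\) and \(M_Z\) nef; then non-vanishing in lower dimension for \((Z,B_Z+M_Z')\), where \(M_Z'\sim_{\Q}M_Z\) is an effective divisor produced from the semi-ampleness of the moduli part and absorbed into the boundary, gives an effective divisor \(\Q\)-proportional to \(K_Z+B_Z+M_Z\), whose pullback by \(g\) is effective and \(\Q\)-proportional to \(K_{Y'}+\Gamma'\); crucially, no lifting of sections is needed here, precisely because \(K_{Y'}+\Gamma'\) is itself a pullback. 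The remaining case, \(K_{Y'}+\Gamma'\) big over \(Z\), is the ``relatively of log general type'' situation: running the relative \((K_{Y'}+\Gamma')\)-MMP produces the relative log canonical model, and weak positivity of the pushforward of its relative pluri-log-canonical sheaves together with the lower-dimensional hypothesis again yields \(\kappa(D)\ge 0\).

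I expect the non-pseudo-effective branch to be the only real obstacle. Reaching an honest lc-trivial fibration (handling the relatively big sub-case and the intervening relative MMP) and, above all, controlling the nef moduli part \(M_Z\) --- whose semi-ampleness is what the argument actually uses --- are exactly what the cited versions of the canonical bundle formula from \cite{haconxu} and \cite{fujinomori} are designed to supply, so this last step is an application of existing machinery rather than a new difficulty, which is why the whole reduction is, as claimed, an easy consequence of Theorem~\ref{maintheorem1}. Finally, the irregularity hypothesis \(q(X)>0\) enters only in invoking Theorem~\ref{maintheorem1} for the klt pairs of dimension \(\dim X\) that occur; the base \(Z\) of the fibration may well have \(q(Z)=0\), but there the lower-dimensional non-vanishing hypothesis takes over.
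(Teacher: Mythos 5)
Your first two branches (reducing to a $\Q$-factorial dlt modification, and the case where $K_Y+\{\Gamma\}$ is pseudo-effective) are fine, but the branch you yourself flag as the real content --- $K_Y+\{\Gamma\}$ not pseudo-effective --- contains a genuine gap that the rest of the argument cannot repair. You run a $(K_Y+\{\Gamma\})$-MMP ending in a Mori fibre space $g\colon Y'\to Z$ and propose to prove $\kappa(K_{Y'}+\Gamma')\ge 0$ there. But the steps of that MMP are $(K_Y+\{\Gamma\})$-negative, not $(K_Y+\Gamma)$-non-positive, so they may contract divisors or flip curves on which $K_Y+\Gamma$ is \emph{positive}. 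For a birational contraction $\phi\colon Y\dashrightarrow Y'$ one only has the inclusion $H^0(Y,m(K_Y+\Gamma))\hookrightarrow H^0(Y',m(K_{Y'}+\Gamma'))$: the Kodaira dimension can jump from $-\infty$ to $\ge 0$ under pushforward (contract the exceptional curve $E$ on the blow-up of a point of $\mathbb{P}^2$ and push forward $p^*H-2E$, whose image is $H$ while no multiple of $p^*H-2E$ has sections). So effectivity of $K_{Y'}+\Gamma'$ does not transfer back to $K_Y+\Gamma=\pi^*(K_X+\Delta)$, which is what the corollary asks for. A related defect: since these steps need not be $(K_Y+\Gamma)$-non-positive, $(Y',\Gamma')$ need not remain lc, so the lc-trivial fibration machinery over $Z$ is not obviously available.

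Even granting the reduction to $Z$, two further steps are not routine. The canonical bundle formula produces $K_{Y'}+\Gamma'\sim_{\Q}g^*(K_Z+B_Z+M_Z)$ with $M_Z$ only \emph{nef}; the b-semi-ampleness of the moduli part, which you invoke to replace $M_Z$ by an effective divisor and land in an honest lc pair $(Z,B_Z+M_Z')$, is an open conjecture in general, and without it Conjecture~\ref{nonvanishingconjecture} in lower dimension does not apply (you would need non-vanishing for generalized pairs, a strictly stronger hypothesis). Your relatively-big case is exactly the Birkar--Chen situation \cite{BirkarChen} that the paper explicitly sets out to avoid. The paper's own proof takes a shorter route with no MMP at all: it applies Theorem~\ref{canonicalformula} directly to the Albanese map, uses property (5) to subtract $\epsilon P$ and obtain a klt \emph{polarized} base pair $(\tilde Y,\Delta'+L')$ with $K_{\tilde Y}+\Delta'+L'$ big over $\operatorname{Alb}(X)$, absorbs the nef part into a klt boundary $\Delta_\delta$ using that relative bigness, applies Theorem~\ref{maintheorem1} to $(\tilde Y,\Delta_\delta)$ --- which still has positive irregularity --- and only then lifts sections back through $\tilde f$ and $\psi$. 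That is where $q(X)>0$ is actually used, and it is what lets the argument dodge both the MMP-direction problem and the moduli-part problem.
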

\par A crucial step in proving the results announced above is the application of the so-called Chen-Jiang decomposition. More precisely, given a morphism \( f: X \to A \), where \( A \) is an abelian variety, the following decomposition is provided by \cite{chenjiang} and \cite{lombardipopaschnell}:
\[
f_{*}(\omega_{X}^{\otimes m}) = \bigoplus_{i \in I}(\alpha_{i} \otimes p_{i}^{*}\mathcal{F}_{i}) \text{.}
\]
\par The morphisms \( p_{i} : A \to A_{i} \) are algebraic fiber spaces, where \( A_{i} \) are abelian varieties, \( \mathcal{F}_{i} \) are nonzero M-regular coherent sheaves on \( A_{i} \), and \( \alpha_{i} \in \operatorname{Pic}^{0}(A) \) are torsion line bundles. Later, in \cite{jiang} and \cite{fanjunmeng}, the Chen-Jiang decomposition is generalized to a klt pair $(X, \Delta)$, they proved that 
if $D \sim_{\Q} m(K_{X} + \Delta)$ is Cartier, Then for every positive integer $N$ such that $f_{*}(ND) \neq 0$, we have 
\[
f_{*}(ND) = \bigoplus_{i \in I}(\alpha_{i} \otimes p_{i}^{*}\mathcal{F}_{i}) \text{.}
\]
In the same articles \cite{jiang} and \cite{fanjunmeng}, the authors asked whether the previous decomposition is still satisfied for an lc pair. Here, we remark that using a canonical bundle formula from \cite{haconxu}, we can find a subsheaf that admits the Chen-Jiang decomposition. More precisely, we have the following proposition.
\begin{proposition}
    \label{maintheorem3}
Let \((X, \Delta)\) be an lc pair such that \(D \sim_{\mathbb{Q}} m(K_X + \Delta)\) is Cartier, and let \(f: X \to A\) be a morphism to an abelian variety. If \(\kappa(D_{|_F}) \geq 0\), where \(F\) is the general fiber of \(f\), then for every positive integer \(N\) that is sufficiently large and divisible such that \(f_*(ND) \neq 0\), there exists a torsion-free subsheaf \(\mathcal{F}\) of \(f_*(ND)\) such that \(\mathcal{F}\) admits the Chen-Jiang decomposition.
\end{proposition}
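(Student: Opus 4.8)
The plan is to reduce the assertion to the klt case, where the Chen--Jiang decomposition of a pushforward of a pluricanonical sheaf is already available by the results of Jiang and Meng quoted above, and then to transport that decomposition back into $f_*(ND)$. The reduction is performed along the relative Iitaka fibration of $D$ over $A$, and the passage from lc to klt is effected by the canonical bundle formula of Hacon--Xu.

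Concretely, fix $N$ sufficiently large and divisible and take a log resolution $\mu\colon X'\to X$ of $(X,\Delta)$ which simultaneously resolves the relative linear systems $|ND/A|$, so that the relative Iitaka fibration of $\mu^*D$ over $A$ is a morphism $g\colon X'\to Y$; write $f\circ\mu=h\circ g$ with $h\colon Y\to A$. Let $K_{X'}+\Delta'=\mu^*(K_X+\Delta)+E'$ be a crepant log pullback with $\Delta',E'\ge 0$ having no common component, $E'$ $\mu$-exceptional and $(X',\Delta')$ lc. Then for $N$ divisible enough there is a Cartier divisor $D'\sim_\Q m(K_{X'}+\Delta')$ on $X'$ with $\mu_*\mathcal O_{X'}(ND')=\mathcal O_X(ND)$, hence $f_*(ND)=(f\circ\mu)_*(ND')$; and on a general fibre $G$ of $g$ one has $\kappa(D'|_G)=0$. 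Absorbing the relative fixed part of $|ND'/Y|$ into the boundary in the standard way, $g$ becomes an lc-trivial fibration, and the canonical bundle formula of Hacon--Xu (in the form due to Fujino--Mori) yields, on a suitable birational model of $Y$ on which the moduli part is nef, a \emph{klt} pair $(Y,\Delta_Y)$ and a Cartier divisor $D_Y\sim_\Q m_Y(K_Y+\Delta_Y)$ — the moduli part being incorporated into $\Delta_Y$ — together with a natural injection $h_*(N'D_Y)\hookrightarrow f_*(ND)$ for an appropriate $N'$, gotten by pulling back sections on $Y$ through $g$ and pushing forward through $f\circ\mu$. Applying the generalised Chen--Jiang decomposition of Jiang and Meng to $(Y,\Delta_Y)$, the Cartier divisor $D_Y$ and $h\colon Y\to A$ gives $h_*(N'D_Y)=\bigoplus_i\alpha_i\otimes p_i^*\mathcal F_i$ with $\alpha_i$ torsion in $\operatorname{Pic}^0(A)$, $p_i\colon A\to A_i$ fibre spaces onto abelian varieties and $\mathcal F_i$ nonzero M-regular. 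Hence $\mathcal F:=h_*(N'D_Y)\subseteq f_*(ND)$ is the required torsion-free subsheaf carrying the Chen--Jiang decomposition (torsion-freeness being automatic, since $f$ is surjective and $X$ is irreducible).

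The main obstacle is exactly the klt-ization of the base in the canonical bundle formula step. The discriminant divisor of an lc-trivial fibration whose total space is only lc is a priori merely lc on the base, and the moduli part is only known to be nef after a birational modification; these are the reasons the statement yields a \emph{subsheaf} rather than $f_*(ND)$ itself, since making the moduli part nef forces one to modify $Y$, after which $h_*(N'D_Y)$ only maps into $f_*(ND)$. The hypothesis $\kappa(D|_F)\ge 0$ is what makes the argument run at all — it is what lets one form the relative Iitaka fibration and guarantees that the relative $m$-canonical system is nonempty, hence that $g$ carries an lc-trivial structure. The rest is bookkeeping: choosing $N$ and $N'$ large, divisible and compatible with the (eventual) periodicity of the relative log canonical algebra so that $f_*(ND)\ne 0$ and $h_*(N'D_Y)\ne 0$; tracking the exceptional divisor $E'$; handling the finitely many divisors of $Y$ along which the discriminant could prevent klt-ness (either by a small perturbation once the moduli part is nef, or, in the case where the non-klt locus of $(X,\Delta)$ dominates $A$, by first restricting to a minimal lc center dominating $A$ and invoking Kawamata subadjunction there); and checking that the injection $h_*(N'D_Y)\hookrightarrow f_*(ND)$ is compatible with the direct-image formalism.
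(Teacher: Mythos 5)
Your proposal is correct and follows essentially the same route as the paper: pass to the relative Iitaka fibration over $A$, use the Hacon--Xu/Fujino--Mori canonical bundle formula to produce a klt (polarized) pair on the base after a small perturbation absorbing the nef moduli part and the coefficient-one components of the discriminant, apply the klt Chen--Jiang decomposition of Jiang and Meng to the pushforward from that base to $A$, and identify the result with a torsion-free subsheaf of $f_*(ND)$. The paper packages the Iitaka-fibration and klt-ization steps into its Theorem \ref{canonicalformula} (using property (5) and the subtraction of $\epsilon P$ upstairs, together with relative bigness of $K_{\tilde{Y}}+\Delta'+L'$ over $A$ for the perturbation), exactly the points you flag as the main obstacles.
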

\par From the above, we observe that MMP problems are more manageable for irregular varieties due to the extensive development of techniques in this setting. The most challenging aspect, however, lies in working with varieties that have no irregularity, as we cannot use morphisms to abelian varieties. In such cases, it is necessary to explore alternatives, using the so-called Catanese-Fujita-Kawamata decomposition particularly relevant from our perspective.
\par We know by \cite{catanese2}, \cite{catanese3}, \cite{fujitapaper}, \cite{haconpopaschnell}, \cite{schnelllombardi} that if $f:X \to Y$ is a surjective morphism with $X$ and $Y$ being smooth varieties, then $f_{*}(mK_{X/Y})$ is a torsion free sheaf, it has a singular metric with semi-positive curvature, satisfies the minimal extention property \cite[Definition 2.1]{schnelllombardi}, and admits the Catanese-Fujita-Kawamata decomposition, that is 
\[
f_{*}(mK_{X/Y}) = \mathcal{A}_{m} \oplus \mathcal{U}_{m} \text{,}
\]
where $A_{m}$ is a generically ample sheaf and $\mathcal{U}_{m}$ is flat. This decomposition holds in the singular case, that is, for \((X, \Delta)\) being klt, and it should well known to experts. Since we could not find it  in the literature, we provide it here.
\begin{theorem}\label{maintheorem5}
     Let \( f: X \to Y \) be a surjective morphism, and let \( (X, \Delta) \) be a klt pair such that \( D \sim_{\mathbb{Q}} m(K_{X/Y} + \Delta) \) is Cartier. Then, for every positive integer \( N \) that is sufficiently large and divisible such that \( f_{*}(ND) \neq 0 \), the sheaf \( f_{*}(ND) \) is torsion-free, it has a singular metric with semi-positive curvature, satisfies the minimal extension property, and admits the Catanese-Fujita-Kawamata decomposition
    \[
    f_{*}(ND) = \mathcal{A}_{N} \oplus \mathcal{U}_{N} \text{.}
    \]
\end{theorem}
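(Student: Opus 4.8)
We sketch the argument. The plan is to reduce the statement to the case, recalled just before the theorem, of the relative pluricanonical sheaf of a surjective morphism of \emph{smooth} projective varieties, and then to observe that $f_{*}(ND)$ occurs there as a direct summand, so that the four properties are inherited. Throughout we take $X$ and $Y$ smooth projective: for $X$ this is arranged in the first step, and for $Y$ it is the setting in which the quoted results are stated. First I would take a log resolution $\mu\colon X'\to X$ of $(X,\Delta)$ and write $K_{X'}+\Delta'=\mu^{*}(K_{X}+\Delta)+E$, where $\Delta'\geq 0$ and $E\geq 0$ have no common component, $E$ is $\mu$-exceptional, $\operatorname{Supp}\Delta'$ is simple normal crossing, and $\lfloor\Delta'\rfloor=0$; the last two points are where the klt hypothesis enters. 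Subtracting $(f\circ\mu)^{*}K_{Y}$ gives $K_{X'/Y}+\Delta'=\mu^{*}(K_{X/Y}+\Delta)+E$, and since $E$ is effective and $\mu$-exceptional while, for every sufficiently divisible $N$, the divisor $mN(K_{X/Y}+\Delta)$ is Cartier and linearly equivalent to $ND$, applying $\mu_{*}$ to the identity $\mathcal{O}_{X'}\bigl(mN(K_{X'/Y}+\Delta')\bigr)=\mu^{*}\mathcal{O}_{X}(ND)\otimes\mathcal{O}_{X'}(mNE)$ yields
\[
\mu_{*}\mathcal{O}_{X'}\bigl(mN(K_{X'/Y}+\Delta')\bigr)=\mathcal{O}_{X}(ND),
\]
and therefore $f_{*}(ND)=(f\circ\mu)_{*}\mathcal{O}_{X'}\bigl(mN(K_{X'/Y}+\Delta')\bigr)$.

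Next I would apply Kawamata's covering construction (as used, for instance, in establishing the klt Chen--Jiang decomposition in \cite{jiang} and \cite{fanjunmeng}): since $\Delta'$ has simple normal crossing support with coefficients in $[0,1)$, for $N$ sufficiently large and divisible there is a finite surjective morphism $\pi\colon Z\to X'$ with $Z$ smooth projective such that $\mathcal{O}_{X'}(mN\Delta')$ is a direct summand of $\pi_{*}\omega_{Z/X'}^{\otimes mN}$; tensoring with $\omega_{X'/Y}^{\otimes mN}$ and using $\omega_{Z/Y}=\omega_{Z/X'}\otimes\pi^{*}\omega_{X'/Y}$, this says that $\mathcal{O}_{X'}\bigl(mN(K_{X'/Y}+\Delta')\bigr)$ is a direct summand of $\pi_{*}\omega_{Z/Y}^{\otimes mN}$. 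Setting $g:=f\circ\mu\circ\pi\colon Z\to Y$, a surjective morphism of smooth projective varieties, and applying $(f\circ\mu)_{*}$ to this splitting, I conclude from the first step that $f_{*}(ND)$ is a direct summand of $g_{*}\omega_{Z/Y}^{\otimes mN}$.

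Now I would invoke the results recalled before the theorem \cite{catanese2,catanese3,fujitapaper,haconpopaschnell,schnelllombardi}, applied to $g$ with $m$ replaced by $mN$ (again large and divisible): $g_{*}\omega_{Z/Y}^{\otimes mN}$ is torsion-free, carries a singular Hermitian metric with semi-positive curvature, satisfies the minimal extension property, and admits the Catanese--Fujita--Kawamata decomposition $\mathcal{A}\oplus\mathcal{U}$. It then remains to transfer each property to the direct summand $f_{*}(ND)$: torsion-freeness is immediate; a direct summand is in particular a quotient, hence inherits a singular metric of semi-positive curvature (the quotient metric); the minimal extension property passes to a direct summand by composing the extension on $g_{*}\omega_{Z/Y}^{\otimes mN}$ with the inclusion and projection of the splitting; and, since the Catanese--Fujita--Kawamata decomposition is canonical (the flat summand being intrinsically characterized, e.g.\ as the maximal Hermitian-flat subsheaf), the idempotent endomorphism of $g_{*}\omega_{Z/Y}^{\otimes mN}$ with image $f_{*}(ND)$ preserves both $\mathcal{A}$ and $\mathcal{U}$, so that $f_{*}(ND)=\mathcal{A}_{N}\oplus\mathcal{U}_{N}$ with $\mathcal{A}_{N}$ a generically ample direct summand of $\mathcal{A}$ (possibly zero) and $\mathcal{U}_{N}$ a flat direct summand of $\mathcal{U}$. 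Since $f_{*}(ND)\neq 0$ by hypothesis, this proves the theorem.

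The step I expect to be most delicate is the combination of the covering construction with the exceptional-divisor bookkeeping in the first step --- that is, verifying that the sheaf one ultimately pushes forward is \emph{exactly} $f_{*}(ND)$ realized as a direct summand, and that the cover $Z$ may be taken smooth --- together with the conceptual point that the canonical Catanese--Fujita--Kawamata decomposition (and the minimal extension property) are stable under passage to direct summands; I would isolate these as separate lemmas.
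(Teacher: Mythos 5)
Your strategy is genuinely different from the paper's. The paper proves Theorem \ref{maintheorem5} analytically: it sets $D_{N}=(Nm-1)K_{X/Y}+Nm\Delta$, invokes P\u{a}un--Takayama to equip $\mathcal{O}(D_N)$ with a semi-positively curved singular metric, notes that $f_{*}\bigl(\mathcal{O}_X(K_{X/Y}+D_N)\otimes\mathcal{I}(h_N)\bigr)\hookrightarrow f_{*}(ND)$ is generically an isomorphism, transfers the metric and the minimal extension property across this generic isomorphism via \cite[Proposition 2.2]{schnelllombardi}, and concludes with Theorem \ref{schnelltheorem}. Your plan --- log resolution, covering trick, reduction to $g_{*}\omega_{Z/Y}^{\otimes mN}$ for smooth $Z$, then descent of the four properties to a direct summand --- is instead the program of the paper's Section 5 (Theorems \ref{finitecov1} and \ref{finitecover2}). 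Your first step (the resolution bookkeeping) and your last step (properties pass to direct summands; indeed one can simply apply Theorem \ref{schnelltheorem} to the summand once it has the metric and the minimal extension property) are fine.

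The gap is the covering step, and it is exactly the point where the paper's own algebraic version needs an extra hypothesis. You assert that for $\Delta'$ SNC with coefficients in $[0,1)$ and $N$ sufficiently divisible there is a finite cover $\pi\colon Z\to X'$ with $Z$ smooth such that $\mathcal{O}_{X'}(mN\Delta')$ is a direct summand of $\pi_{*}\omega_{Z/X'}^{\otimes mN}$. This does not follow from the Kawamata covering construction: a cover branched along the \emph{reduced} components $\Delta_i'$ with ramification indices $d_i$ has $K_{Z/X'}=\sum_i(d_i-1)\widetilde{\Delta}_i'+(\text{auxiliary ramification})$, so the only eigensheaf of $\pi_{*}\omega_{Z/X'}^{\otimes k}$ that is an honest divisorial sheaf on $X'$ (rather than a fractional twist) has coefficient $k-\lceil k/d_i\rceil$ along $\Delta_i'$, which is independent of the coefficient $a_i$ of $\Delta_i'$ in $\Delta'$. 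One checks that $k-\lceil k/d_i\rceil=ka_i$ forces $d_i\in\bigl[\tfrac{1}{1-a_i},\tfrac{k}{k(1-a_i)-1}\bigr)$, an interval that contains no integer for, say, $a_i=3/5$ and $k=mN\geq 15$ divisible by $5$; making $N$ larger and more divisible only shrinks this interval. The classical cyclic covering trick produces $\mathcal{O}(K_{X'}+ \text{(klt boundary data)})$ as a summand of $\pi_{*}\omega_{Z}$ --- the \emph{first} power --- because the floor operations there involve coefficients in $[0,1)$; they do not commute with passing to the $k$-th tensor power. To run your reduction for $m>1$ one must instead take the cyclic cover attached to an effective member of $|N(D-K_{X/Y})|$ (up to base locus), and producing such a member is precisely why Theorem \ref{finitecover2} assumes $f_{*}\mathcal{O}_X(D)$ globally generated, and why the Remark following it only obtains the conclusion for $f_{*}\mathcal{O}_X(D)\otimes L$ with $L$ sufficiently ample on $Y$ in general. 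So as written your argument establishes the theorem only for $m=1$ (the case of Theorem \ref{finitecov1} and its Corollary); for the general case you either need the global-generation input or the analytic route the paper actually takes.
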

\par Klt polarized pairs are important for the minimal model program, and thus we have the following easy corollary.
\begin{corollary}\label{maintheorem6}
    Let $f: X \to Y$ be a surjective morphism, and let $(X, \Delta + L)$ be a klt polarized pair such that $D \sim_{\Q} m(K_{X/Y} + \Delta + L)$ is Cartier and $f$-big. Then for every positive integer $N$ which is sufficiently big and divisible such that $f_{*}(ND) \neq 0$, the sheaf $f_{*}(ND)$ is torsion free, it has a singular metric with semi-positive curvature, satisfies the minimal extention property, and admits the Catanese-Fujita-Kawamata decomposition.
 \[
    f_{*}(ND) = \mathcal{A}_{N} \oplus \mathcal{U}_{N} \text{.}
    \]
    
\end{corollary}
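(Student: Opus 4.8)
The plan is to reduce Corollary~\ref{maintheorem6} to Theorem~\ref{maintheorem5} by absorbing the polarization $L$ into the boundary. Since $(X,\Delta+L)$ is a klt polarized pair, $L$ is a semi-ample $\Q$-divisor, so some positive multiple of it is base-point free; fix $k$ sufficiently large and divisible such that $|kL|$ is base-point free and choose a general member $M\in|kL|$. By Bertini $M$ is reduced, and since $(X,\Delta)$ is klt and $k$ is large, the pair $(X,\Delta')$ with $\Delta':=\Delta+\tfrac1k M$ is again klt; moreover $\Delta'\sim_{\Q}\Delta+L$, hence $K_{X/Y}+\Delta'\sim_{\Q}K_{X/Y}+\Delta+L$ and therefore $D\sim_{\Q}m(K_{X/Y}+\Delta')$, with $D$ still Cartier.

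Next, by the $f$-bigness of $D$ we have $f_*(ND)\neq0$ for all $N\gg0$, since $D|_F$ big forces $h^0(F,ND|_F)>0$ for $N\gg0$. Applying Theorem~\ref{maintheorem5} to the klt pair $(X,\Delta')$ together with the Cartier divisor $D$ (which satisfies $D\sim_{\Q}m(K_{X/Y}+\Delta')$), we obtain, for every positive integer $N$ that is sufficiently large and divisible, that $f_*(ND)$ is torsion-free, carries a singular metric with semi-positive curvature, satisfies the minimal extension property, and admits the Catanese--Fujita--Kawamata decomposition $f_*(ND)=\mathcal{A}_N\oplus\mathcal{U}_N$. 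Since $f_*(ND)$ depends only on the Cartier divisor $D$ and not on the chosen boundary, this is exactly the assertion of the corollary.

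The only genuine point is the first step, that is, producing an effective $\Delta'$ with $(X,\Delta')$ klt and an \emph{honest} $\Q$-linear equivalence $K_{X/Y}+\Delta'\sim_{\Q}K_{X/Y}+\Delta+L$; for semi-ample $L$ this is the elementary argument above. If one only assumes $L$ nef, one must instead invoke Kodaira's lemma in the relative setting, using that $D$ is $f$-big: write $K_{X/Y}+\Delta+L\sim_{\Q,f}(1-\varepsilon)(K_{X/Y}+\Delta)+\varepsilon E+\bigl[\varepsilon A+(1-\varepsilon)L\bigr]$ with $A$ an $f$-ample $\Q$-divisor, $E\geq0$ and $0<\varepsilon\ll1$, and then replace the $f$-ample term $\varepsilon A+(1-\varepsilon)L$ by a general effective $\Q$-divisor in its relative linear system. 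Here care is needed, since a priori this only yields $K_{X/Y}+\Delta'\sim_{\Q,f}K_{X/Y}+\Delta+L$, i.e.\ the two sides differ by the pullback of a $\Q$-divisor on $Y$: a twist by a line bundle on $Y$ that is not numerically trivial would destroy the flat summand $\mathcal{U}_N$, and one that is not pseudo-effective would spoil the semi-positive metric and the minimal extension property. Upgrading this to an honest global equivalence — or verifying that the resulting base twist is harmless — is the main obstacle, and it is the reason the hypothesis ``$f$-big'' (rather than merely ``$f_*(ND)\neq0$'') is imposed.
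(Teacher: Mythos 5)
Your argument is essentially the paper's own proof of this corollary. One small mismatch: the paper defines a klt polarized pair with $L$ merely \emph{nef} (see the Notation section), so your first reduction via a general member of $|kL|$ does not apply as stated; but the fallback you give for nef $L$ --- the relative Kodaira lemma writing $K_{X/Y}+\Delta+L\sim_{\mathbb{Q},f}M+E$ with $M$ ample and $E$ effective, absorbing a small multiple of $M+E$ into a new klt boundary $\Delta_\delta$ with $K_{X/Y}+\Delta_\delta\sim_{\mathbb{Q},f}(1+\delta)(K_{X/Y}+\Delta+L)$, and then invoking Theorem \ref{maintheorem5} --- is precisely what the paper does. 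The ``main obstacle'' you flag at the end, namely that $\sim_{\mathbb{Q},f}$ only identifies $f_{*}(ND)$ with $f_{*}(Nm_\delta(K_{X/Y}+\Delta_\delta))$ up to a twist by a line bundle pulled back from $Y$, and that such a twist can a priori destroy the semi-positive metric, the minimal extension property, and the flatness of $\mathcal{U}_N$, is a genuine subtlety; but it is present, and unaddressed, in the paper's proof as well, which simply concludes after establishing the relative equivalence. So relative to the paper you have lost nothing --- if anything, you have made explicit the one point that both arguments leave open.
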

\begin{theorem}\label{maintheorem7}
    Let \( f: X \to Y \) be a flat algebraic fiber space of relative dimension \( p \), and let \( (X, \Delta) \) be a klt pair such that \( D \sim_{\mathbb{Q}} m(K_{X/Y} + \Delta) \) is Cartier. Let \( N \) be a positive integer that is sufficiently large and divisible such that \( f_{*}(ND) \neq 0 \), and assume that \( h^{p}(F, (1 - Nm)K_{F} - Nm\Delta_F) \neq 0 \) is constant for every fiber \( F \), with \( P_N := h^{0}(\mathcal{R}^{p}f_{*}((1 - Nm)K_{X/Y} - Nm\Delta)) > 0 \). Then \( \mathcal{O}_{Y}^{\oplus P_N} \) is a direct summand of \( f_{*}(ND) \).
\end{theorem}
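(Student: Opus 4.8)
The plan is to push everything onto the locally free sheaf $\mathcal{E}:=f_{*}\mathcal{O}_{X}(ND)$ and its Catanese--Fujita--Kawamata decomposition from Theorem \ref{maintheorem5}. Since $N$ is divisible, $ND$ is Cartier; since $(X,\Delta)$ is klt, $X$ is Cohen--Macaulay, so $f$ is flat, proper and Cohen--Macaulay of pure relative dimension $p$, its relative dualizing complex is $\omega_{X/Y}[p]$, and $\omega_{X/Y}$ is flat over $Y$ and compatible with base change. Because $(1-Nm)K_{X/Y}-Nm\Delta\sim_{\mathbb{Q}}K_{X/Y}-Nm(K_{X/Y}+\Delta)\sim_{\mathbb{Q}}K_{X/Y}-ND$ with $ND$ Cartier, I read the sheaf in the statement as $\mathcal{N}:=\mathcal{R}^{p}f_{*}\bigl(\omega_{X/Y}\otimes\mathcal{O}_{X}(-ND)\bigr)$, so that $P_{N}=h^{0}(Y,\mathcal{N})$.

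First I would do the base-change bookkeeping. On each fiber $F=X_{y}$ (projective, Cohen--Macaulay, of dimension $p$), Serre duality gives $h^{p}(F,\omega_{F}\otimes\mathcal{O}_{F}(-ND|_{F}))=h^{0}(F,\mathcal{O}_{F}(ND|_{F}))$, so the hypothesis says $y\mapsto h^{0}(F_{y},\mathcal{O}_{F_{y}}(ND|_{F_{y}}))$ is a constant $r\geq 1$. By Grauert's theorem $\mathcal{E}$ is then locally free of rank $r$ and compatible with base change; and, the top relative cohomology always being compatible with base change, $\mathcal{N}$ is locally free of rank $r$ as well, with $\mathcal{N}\otimes k(y)\cong H^{p}(F_{y},\omega_{F_{y}}\otimes\mathcal{O}_{F_{y}}(-ND|_{F_{y}}))$.

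The technical heart is relative Grothendieck--Serre duality, which provides a natural morphism $\theta\colon\mathcal{N}\to\mathcal{H}om_{\mathcal{O}_{Y}}(\mathcal{E},\mathcal{O}_{Y})=\mathcal{E}^{\vee}$ coming from the relative trace pairing $\mathcal{E}\otimes\mathcal{N}\to\mathcal{R}^{p}f_{*}\omega_{X/Y}\to\mathcal{O}_{Y}$. By compatibility of duality with base change, under the identifications above $\theta\otimes k(y)$ is the Serre duality pairing of $F_{y}$, which is perfect; hence $\theta\otimes k(y)$ is an isomorphism for every $y$, and since $\mathcal{N}$ and $\mathcal{E}^{\vee}$ are locally free this forces $\theta$ to be an isomorphism. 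Thus $\mathcal{N}\cong\mathcal{E}^{\vee}$, and $P_{N}=h^{0}(Y,\mathcal{E}^{\vee})=\dim_{\mathbb{C}}\operatorname{Hom}_{\mathcal{O}_{Y}}(\mathcal{E},\mathcal{O}_{Y})$. This is the step I expect to require the most care: correctly interpreting the sheaf $(1-Nm)K_{X/Y}-Nm\Delta$, and invoking base change for Grothendieck duality over the (possibly singular, but Cohen--Macaulay) total space $X$.

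Finally I would invoke Theorem \ref{maintheorem5}: $\mathcal{E}=f_{*}\mathcal{O}_{X}(ND)=\mathcal{A}_{N}\oplus\mathcal{U}_{N}$ with $\mathcal{A}_{N}$ generically ample and $\mathcal{U}_{N}$ Hermitian flat, say associated to a unitary representation $\rho$ of $\pi_{1}(Y)$. A nonzero $\mathcal{A}_{N}\to\mathcal{O}_{Y}$ would, after restriction to a general complete-intersection curve $C\subseteq Y$, give a nonzero quotient of the ample bundle $\mathcal{A}_{N}|_{C}$ (hence of positive degree) sitting inside $\mathcal{O}_{C}$ (hence of degree $\leq 0$); so $\operatorname{Hom}_{\mathcal{O}_{Y}}(\mathcal{A}_{N},\mathcal{O}_{Y})=0$, and therefore $P_{N}=\dim_{\mathbb{C}}\operatorname{Hom}_{\mathcal{O}_{Y}}(\mathcal{U}_{N},\mathcal{O}_{Y})=h^{0}(Y,\mathcal{U}_{N}^{\vee})=h^{0}(Y,\mathcal{U}_{N})$, the last equality because a Hermitian flat bundle and its dual have the same number of global sections (both compute the multiplicity of the trivial summand of $\rho$). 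Since $\rho$ is unitary it is semisimple, so its trivial-isotypic part splits off: $\mathcal{U}_{N}\cong\mathcal{O}_{Y}^{\oplus h^{0}(Y,\mathcal{U}_{N})}\oplus\mathcal{U}'_{N}$ with $H^{0}(Y,\mathcal{U}'_{N})=0$. Hence $\mathcal{O}_{Y}^{\oplus P_{N}}$ is a direct summand of $\mathcal{U}_{N}$, and thus of $f_{*}(ND)$, as claimed; the hypothesis $P_{N}>0$ only ensures that this summand is non-trivial.
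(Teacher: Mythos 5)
Your proof is correct, and its first half --- Grauert's theorem plus relative Serre duality to identify $P_N$ with $\dim \operatorname{Hom}(f_{*}(ND),\mathcal{O}_Y)$ --- coincides with the paper's. The two arguments diverge at the concluding step. The paper chooses a basis of $H^{0}(Y,\mathcal{R}^{p}f_{*}((1-Nm)K_{X/Y}-Nm\Delta))$, uses it to form a short exact sequence $0\to\mathcal{O}_Y^{\oplus P_N}\to\mathcal{R}^{p}f_{*}(\cdots)\to\mathcal{Q}_N\to 0$, dualizes to get a surjection $f_{*}(ND)\to\mathcal{O}_Y^{\oplus P_N}$, and then invokes the Hacon--Popa--Schnell splitting criterion for sheaves carrying a singular Hermitian metric with semi-positive curvature and the minimal extension property. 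You instead pass through the already-established Catanese--Fujita--Kawamata decomposition $f_{*}(ND)=\mathcal{A}_N\oplus\mathcal{U}_N$ of Theorem \ref{maintheorem5}, kill $\operatorname{Hom}(\mathcal{A}_N,\mathcal{O}_Y)$ by a degree computation on a general complete intersection curve, and extract the trivial isotypic summand of the unitary flat bundle $\mathcal{U}_N$ by semisimplicity. Both routes rest on the same underlying metric positivity of $f_{*}(ND)$, but yours is more self-contained given that Theorem \ref{maintheorem5} is proved earlier in the paper, yields the extra information that the trivial summand sits inside the flat part $\mathcal{U}_N$, and sidesteps a delicate point in the paper's version: linearly independent global sections of a sheaf need not be generically independent, so the claimed injectivity of $\mathcal{O}_Y^{\oplus P_N}\to\mathcal{R}^{p}f_{*}(\cdots)$ and the exactness of the dualized sequence require a justification that your argument never needs. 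The caveats common to both proofs are that $Y$ is implicitly assumed smooth (needed for the Lombardi--Schnell decomposition and for the parallelism of holomorphic sections of Hermitian flat bundles) and that the fiberwise Serre duality step treats $\omega_{F}$ as the dualizing sheaf of the Cohen--Macaulay fibers; neither issue is specific to your write-up.
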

This last theorem shows that we can produce sections for \( K_{X/Y} + \Delta \). Moreover, under an additional positivity condition for \( Y \), we can produce a section for \( K_{X} + \Delta \). This leads to the following corollary.
\begin{corollary}\label{maintheorem8}
Assume the same assumptions of Theorem \ref{maintheorem7}, and $\kappa(Y) \geq 0$. Then $\kappa(K_{X} + \Delta) \geq 0$. Furthermore, if $P_{N} > 1$ then $\kappa(K_{X} + \Delta) \geq 1$.
\end{corollary}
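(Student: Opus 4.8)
The corollary is an essentially formal consequence of Theorem~\ref{maintheorem7}: a free direct summand of $f_{*}(ND)$ produces global sections, and $\kappa(Y)\ge 0$ turns relative pluricanonical sections into absolute ones.

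By Theorem~\ref{maintheorem7} we may write $f_{*}(ND)=\mathcal{O}_{Y}^{\oplus P_{N}}\oplus\mathcal{G}$ for some coherent sheaf $\mathcal{G}$ on $Y$. Taking global sections over the projective variety $Y$,
\[
h^{0}\big(X,\mathcal{O}_{X}(ND)\big)=h^{0}\big(Y,f_{*}(ND)\big)=P_{N}+h^{0}(Y,\mathcal{G})\ \ge\ P_{N}>0,
\]
so in particular $ND$ is linearly equivalent to an effective divisor. Put $L:=ND+Nm\,f^{*}K_{Y}$ (enlarging $N$, if needed, so that $Nm\,f^{*}K_{Y}$ is Cartier). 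From $ND\sim_{\Q}Nm(K_{X/Y}+\Delta)$ and $K_{X/Y}=K_{X}-f^{*}K_{Y}$ we get $L\sim_{\Q}Nm(K_{X}+\Delta)$, hence $\kappa(X,K_{X}+\Delta)=\kappa(X,L)$.

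Since $\kappa(Y)\ge 0$, fix $c\ge 1$ and an effective divisor $E$ on $Y$ with $cK_{Y}\sim E$; then $f^{*}E\ge 0$ and $f^{*}E\sim c\,f^{*}K_{Y}$. For every positive integer $b$ divisible by $c$,
\[
bL\ =\ bND+bNm\,f^{*}K_{Y}\ \sim\ bND+\tfrac{bNm}{c}\,f^{*}E ,
\]
where $\tfrac{bNm}{c}f^{*}E\ge 0$ and, $ND$ being effective up to linear equivalence, $h^{0}(X,\mathcal{O}_{X}(bND))\ge h^{0}(X,\mathcal{O}_{X}(ND))\ge P_{N}$. Multiplying by a nonzero section of $\mathcal{O}_{X}\!\big(\tfrac{bNm}{c}f^{*}E\big)$ embeds $H^{0}(X,\mathcal{O}_{X}(bND))$ into $H^{0}(X,\mathcal{O}_{X}(bL))$, so $h^{0}(X,\mathcal{O}_{X}(bL))\ge P_{N}$ for all such $b$. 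As $P_{N}>0$, this shows that $L$ is $\Q$-effective and therefore $\kappa(X,K_{X}+\Delta)=\kappa(X,L)\ge 0$. If moreover $P_{N}>1$, then $h^{0}(X,\mathcal{O}_{X}(bL))\ge 2$ for those $b$, so two linearly independent sections define a non-constant rational map out of $X$, whence $\kappa(X,L)\ge 1$, i.e.\ $\kappa(X,K_{X}+\Delta)\ge 1$.

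There is no serious obstacle here; the substance of the argument is contained in Theorem~\ref{maintheorem7}. The only care needed is the usual $\Q$-divisor bookkeeping — choosing $b$ sufficiently divisible so that every divisor in sight is integral and no sections are lost under rounding — together with the standard facts that Iitaka dimension is invariant under passing to positive multiples and that a line bundle with two linearly independent global sections defines a non-constant rational map.
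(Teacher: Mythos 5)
Your proof is correct and follows essentially the same route as the paper: the trivial summand $\mathcal{O}_{Y}^{\oplus P_{N}}$ from Theorem~\ref{maintheorem7} supplies sections of $ND\sim_{\Q}Nm(K_{X/Y}+\Delta)$, and multiplication by pullbacks of pluricanonical sections of $Y$ (available since $\kappa(Y)\ge 0$) converts these into sections of a multiple of $K_{X}+\Delta$. You are somewhat more explicit than the paper about the divisibility bookkeeping and about why $P_{N}>1$ forces $\kappa(K_{X}+\Delta)\ge 1$, which the paper dismisses as clear, but the underlying argument is the same.
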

Recall that if \( f: X \to A \) is a morphism to an Abelian variety, then for a klt pair \( (X, \Delta) \), we have the Chen-Jiang decomposition of \( f_{*}(Nm(K_{X} + \Delta)) \) such that $Nm(K_{X} + \Delta) \sim ND$. It is natural to ask the same question for the Catanese-Fujita-Kawamata decomposition under any surjective morphism \( f: X \to Y \). Of course, clearly, we cannot make such a change of base since it is known that \( f_{*}(Nm(K_{X/Y} + \Delta)) \) is not necessarily semi-ample, and the flat part depends on the monodromy group. However, we still have something like the following proposition.
\begin{proposition}\label{maintheorem9}
Let \( f: X \to Y \) be a surjective morphism with \( q(Y) \geq 1 \), and let \( (X, \Delta) \) be a klt pair such that \( D \sim_{\mathbb{Q}} m(K_{X} + \Delta) \) is Cartier. Then, for every positive integer \( N \) that is sufficiently large and divisible such that \( f_{*}(ND) \neq 0 \), there exist finite maps \( p: \tilde{X} \to X \), \( q: \tilde{Y} \to Y \), and \( \phi: \tilde{A} \to \operatorname{Alb}(Y) \) with a Cartier divisor \( \tilde{D} = p^{*}D \), and a surjective morphism \( \tilde{f}: \tilde{X} \to \tilde{Y} \) such that \( (g \circ \tilde{f})_{*}(N \tilde{D}) \) is globally generated, where \( g: \tilde{Y} \to \tilde{A} \).
\end{proposition}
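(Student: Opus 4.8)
The plan is to transport the Chen--Jiang decomposition along the Albanese morphism of $Y$, kill all of its pieces by a single multiplication isogeny, and then descend along a flat base change. To set up, put $A:=\operatorname{Alb}(Y)$ (nontrivial, since $q(Y)\ge 1$), let $a_Y\colon Y\to A$ be the Albanese morphism, and set $h:=a_Y\circ f\colon X\to A$. Since $f_*(ND)\neq 0$ and $a_Y$ is proper, $h_*(ND)=(a_Y)_*f_*(ND)\neq 0$, so the Chen--Jiang decomposition for klt pairs (\cite{jiang},\cite{fanjunmeng}) applies to $h$ and yields
\[
h_*(ND)=\bigoplus_{i\in I}\bigl(\alpha_i\otimes p_i^{*}\mathcal F_i\bigr),
\]
with $p_i\colon A\to A_i$ surjective homomorphisms of abelian varieties, $\mathcal F_i$ nonzero $M$-regular sheaves on $A_i$, and $\alpha_i\in\operatorname{Pic}^{0}(A)$ torsion.

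Next I would invoke the fact that each $M$-regular $\mathcal F_i$ becomes globally generated after pulling back along $[m_i]_{A_i}$ for a suitable $m_i\in\N$. Indeed, by \cite{popapareschi} an $M$-regular sheaf is continuously globally generated, so for a general tuple of characters $\beta_1,\dots,\beta_{k_i}\in\operatorname{Pic}^{0}(A_i)$ the evaluation map $\bigoplus_j H^{0}(\mathcal F_i\otimes\beta_j^{-1})\otimes\beta_j\twoheadrightarrow\mathcal F_i$ is surjective; the locus of such tuples is open and nonempty in $A_i^{k_i}$, hence meets the Zariski-dense set of tuples of torsion characters, so we may take $\beta_1,\dots,\beta_{k_i}$ torsion and choose $m_i$ with $[m_i]_{A_i}^{*}\beta_j\cong\mathcal O_{A_i}$ for all $j$; then $[m_i]_{A_i}^{*}\mathcal F_i$ is a quotient of a trivial bundle, hence globally generated.

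Now let $n$ be a common multiple of all $m_i$ and of the orders of all $\alpha_i$, put $\tilde A:=A$ and $\phi:=[n]_A\colon\tilde A\to\operatorname{Alb}(Y)$. Since $[n]_A^{*}$ acts as multiplication by $n$ on $\operatorname{Pic}^{0}(A)$ and $p_i\circ[n]_A=[n]_{A_i}\circ p_i$, we get $\phi^{*}(\alpha_i\otimes p_i^{*}\mathcal F_i)=\mathcal O_A\otimes p_i^{*}\bigl([n]_{A_i}^{*}\mathcal F_i\bigr)$, which is globally generated because $[n]_{A_i}^{*}\mathcal F_i$ is a pullback of the globally generated $[m_i]_{A_i}^{*}\mathcal F_i$; hence $\phi^{*}h_*(ND)$ is globally generated. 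Then I form $\tilde Y':=Y\times_A\tilde A$ and $\tilde X':=X\times_A\tilde A=X\times_Y\tilde Y'$, with finite étale projections $q'\colon\tilde Y'\to Y$ and $p'\colon\tilde X'\to X$, the base change $\tilde f'\colon\tilde X'\to\tilde Y'$ of $f$ (still surjective), the projection $g'\colon\tilde Y'\to\tilde A$, and $h':=g'\circ\tilde f'\colon\tilde X'\to\tilde A$ (the base change of $h$). Flat base change gives $h'_*\bigl(p'^{*}\mathcal O_X(ND)\bigr)\cong\phi^{*}h_*(ND)$, globally generated. Finally I pick a connected component $\tilde X$ of $\tilde X'$ and set $p:=p'|_{\tilde X}\colon\tilde X\to X$ (finite, and surjective as $X$ is connected), $\tilde D:=p^{*}D$ (Cartier), $\tilde Y:=\tilde f'(\tilde X)$, $q:=q'|_{\tilde Y}\colon\tilde Y\to Y$ (finite, and surjective because $q'\circ\tilde f'=f\circ p'$ and $p$ is surjective), $\tilde f:=\tilde f'|_{\tilde X}\colon\tilde X\to\tilde Y$ (surjective), $g:=g'|_{\tilde Y}\colon\tilde Y\to\tilde A$. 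Since $\tilde X'$ is the disjoint union of its finitely many connected components, $h'_*\bigl(p'^{*}\mathcal O_X(ND)\bigr)$ is the direct sum of the corresponding pushforwards, and the summand indexed by $\tilde X$ is exactly $(g\circ\tilde f)_*\mathcal O_{\tilde X}(N\tilde D)$; being a direct summand of a globally generated sheaf, it is globally generated, as required.

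I expect the delicate point to be the second step: making precise the statement ``$M$-regular $\Rightarrow$ globally generated after a multiplication isogeny'' and, in particular, the harmless but slightly technical observation that the auxiliary characters produced by continuous global generation can be chosen torsion, so that an honest isogeny — not merely a twist by an ample line bundle — does the job. Once that is available, the assembly of the single isogeny $\phi=[n]_A$ together with the flat base change is routine, the only care needed being the bookkeeping with connected components of the base-changed varieties.
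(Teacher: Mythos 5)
Your proposal is correct and follows essentially the same route as the paper: apply the Chen--Jiang decomposition to $(\operatorname{alb}\circ f)_*(ND)$, pass to an isogeny of $\operatorname{Alb}(Y)$ that kills the torsion line bundles and makes the $M$-regular pieces globally generated, and descend through the fiber-product diagram; the paper merely asserts the existence of such an isogeny and concludes by the commutativity of the diagram, whereas you supply the standard supporting arguments (continuous global generation plus density of torsion points, flat base change, and the bookkeeping of connected components). The one inaccuracy is the opening claim that properness of $a_Y$ forces $(a_Y)_*f_*(ND)\neq 0$ --- this is false in general, since pushforward under a non-finite proper morphism can annihilate a nonzero torsion-free sheaf, and the paper itself hedges with ``if it is not zero''; the slip is harmless here because the conclusion is vacuous when the pushforward vanishes, but the justification should be corrected or the nonvanishing added as a hypothesis.
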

\par In Section 5, we revisit some ideas introduced by Viehweg and explore how they can be used to algebraically derive the existence of the Catanese-Fujita-Kawamata decomposition for a klt pair, assuming we already know the result for the smooth case. For instance, we have Theorem \ref{finitecov1} and Theorem \ref{finitecover2}, which are key observations in this context.
\par \textbf{Notation and conventions.} 
Fix $f: X \to Y$ a proper  morphism of normal projective varieties. We say that $X$ is $\Q$-factorial if every Weil divisor is $\Q$-Cartier. We say that a $\Q$-divisor $D$ is $\Q$-Cartier if some integral multiple is Cartier. We say that two $\Q$-divisors $D_{1}, D_{2}$ on $X$ are $\Q$-linearly equivalent (over $Y$), that is $D_{1} \thicksim_{\Q} D_{2}$ ($D_{1} \thicksim_{\Q, f} D_{2}$) if their difference is an $\Q$-linear combination of principal divisors (and an $\Q$-Cartier divisor pulled back from $Y$). $D_{1}$ and $D_{2}$ are numerically equivalent (over $Y$), denoted $D_{1} \equiv D_{2}$ ($D_{1} \equiv_{f} D_{2}$), if their difference is an $\Q$-Cartier divisor such that $(D_{1} - D_{2}).C =0$ for any curve $C$ (contracted by $f$). A $\Q$-Cartier divisor $D$ is semi-ample over $Y$, or $f$-semi-ample, if  $D$ is $Q$-linearly equivalent to the
pullback of an ample $Q$-divisor over $Y$. Equivalently, $f^{*}f_{*}\mathcal{O}_{X}(mD) \to \mathcal{O}_{X}(mD)$ is surjective for $m>>0$. we say that $\Q$-divisor $D$ is big over $Y$, or $f$-big, if
    $$
    \text{lim sup}_{m \to \infty} \frac{h^{0}(F, \mathcal{O}_{F}(\llcorner mD\lrcorner))}{m^{\dim F}} > 0
    $$
    for the fibre $F$ over any generic point of $Y$. Equivalently $D$ is $f$-big if $D \thicksim_{\Q, f} M + E$ , where $M$ is ample and $E$ effective. We define the Kodaira dimension of a $\Q$-divisor $\Q$-Cartier by 
    $$
    \kappa(D):= \kappa(mD)$$
    for some natural number $m$ such that $mD$ is Cartier.
\par By a pair $(X,\Delta)$, we mean a normal variety $X$ associated with $\Q$-divisor $\Delta: = \sum a_{i}\Delta_{i}$, which is a formal sum of distinct prime divisors $\Delta_{i}$ with $ a_{i} \in [0,1]$ such that $K_{X} + \Delta$ is $\Q$-cartier. For the definitions of singularities of pairs, we refer to \cite{kollarmori}.
\par By a klt polarized pair $(X, \Delta + L)$, we mean a klt pair $(X, \Delta)$ and $L$ is a nef $\Q$-divisor (see \cite[Paragraph 2.2]{Birkarandhu}). 
\par We note the Albanese dimension $\alpha(X)$ of an irregular variety $X$ by 
    \[
    \alpha(X):= \dim \operatorname{alb}(X).
    \]
    Here $\operatorname{alb}(X)$ is the image of the Albanese map $\operatorname{alb}: X \to \operatorname{alb}(X) \subseteq \operatorname{Alb}(X)$, where $\operatorname{Alb}(X)$ is the Albanese variety.
\section{Preliminaries}

\par \par We recall the Fourier-Mukai setting, we refer to Mukai \cite{Mukai} for more details. We denote by $\mathcal{P}$ the Poincaré line bundle on $A \times \operatorname{Pic}^{0}(A)$, and $A$ is an abelian variety of dimension $g$. For any coherent sheaf $\mathcal{F}$ on $A$, we can associate the sheaf ${p_{2}}_{*}(p_{1}^{*}\mathcal{F} \otimes \mathcal{P})$ on $\operatorname{Pic}^{0}(A)$ where
$p_{1}$ and $p_{2}$ are the natural projections on $A$ and $\operatorname{Pic}^{0}(A)$, respectively. This correspondence gives a
functor
\[
\Hat{S}: \operatorname{Coh}(A) \to \operatorname{Coh}(\operatorname{Pic}^{0}(A)).
\]
If we denote by $\operatorname{D}(A)$ and $\operatorname{D}(\operatorname{Pic}^{0}(A))$ the bounded derived categories of $\operatorname{Coh}(A)$ and
$\operatorname{Coh}(\operatorname{Pic}^{0}(A))$, then the derived functor $$\mathcal{R}\Hat{S}: \operatorname{D}(A) \to \operatorname{D}(\operatorname{Pic}^{0}(A))$$  is defined and called the
Fourier-Mukai functor. Similarly, we consider $$\mathcal{R}S : \operatorname{D}(\operatorname{Pic}^{0}(A))  \to \operatorname{D}(A)$$ in a similar
way. According to the celebrated result of Mukai \cite{Mukai}, the Fourier-Mukai functor induces an equivalence of categories between the two derived categories  $\operatorname{D}(A)$ and $\operatorname{D}(\operatorname{Pic}^{0}(A))$. More precisely, we have
\[
\mathcal{R}S \circ \mathcal{R}\Hat{S} \simeq (-1_{A})^{*}[-g]
\]
and 
\[
\mathcal{R}\Hat{S} \circ \mathcal{R}S \simeq (-1_{\operatorname{Pic}^{0}(A)})^{*}[-g]\text{,}
\]
where $[-g]$ is a shift operation for a complex $g$ places to the right.
\par \par We recall some basic definitions and properties of sheaves on abelian varieties, we refer to  \cite{Greenlazarsfeld},
\cite{popasurvey} and \cite{popapareschi} for more details.
\begin{defn}[\cite{Greenlazarsfeld}]
Let $\mathcal{F}$ be a coherent sheaf on an abelian variety $A$.  The set $V^{i}(\mathcal{F})$ is \emph{the cohomology support loci} and defined as the following:
    \[
    V^{i}(\mathcal{F}) := \{p \in \operatorname{Pic}^{0}(A)| H^{i}(A, \mathcal{F} \otimes p) \neq 0\}
    \text{.}\]
\end{defn}
Recall that, in \cite{Greenlazarsfeld}, the authors proved a powerful generic vanishing theorem for $\omega_{Y}$ with $Y$ being an irregular variety. In other words, a generic vanishing theorem for $\operatorname{alb}_{*}(\omega_{Y})$. 
  \begin{defn}[{\cite[Definition 3.1]{popasurvey}}] A coherent sheaf $\mathcal{F}$ on $Y$ is called \emph{M-regular} if 
 \[
\text{codim}_{\operatorname{Pic}^{0}(Y)}\text{(Supp}(\mathcal{R}^{i}\Hat{S}(\mathcal{F}))) > i, \hspace{0.2cm} \forall i \geq 1.
\]
    \par Or equivalentely 
    \[
    \text{codim}_{\operatorname{Pic}^{0}(Y)}(V^{i}(\mathcal{F})) > i, \hspace{0.2cm} \forall i \geq 1.
    \]
A coherent sheaf $\mathcal{F}$ on $Y$ is called a \emph{generic vanishing sheaf} or a \emph{GV-sheaf} if its cohomology support loci $V^{i}(\mathcal{F})$ satisfy the following inequality: 
 \[
    \text{codim}_{\operatorname{Pic}^{0}(Y)}(V^{i}(\mathcal{F})) \geq i, \hspace{0.2cm} \forall i \geq 1.
    \]
\end{defn}
\par We recall certain forms of the canonical bundle formula, originally introduced by Fujino and Mori (\cite{fujinomori}).
\begin{theorem}\label{canonicalformula}[{\cite[Theorem 2.1]{haconxu}, \cite[Theorem 3.1]{Hu}}] 
Let \( f : (X, \Delta) \to Y \) be a projective morphism from an lc pair to a normal variety \( Y \), such that \( N(K_X + \Delta) \) is Cartier and \( f_*(N(K_X + \Delta)) \neq 0 \) for some integer \( N > 0 \). Then, there exists a commutative diagram:
\[
\begin{tikzcd}
\Tilde{X} \arrow[r, "\psi"] \arrow[d, "\Tilde{f}"] & X \arrow[d, "f"] \\
\Tilde{Y} \arrow[r, "\phi"] & Y
\end{tikzcd}
\]
with the following properties:
\begin{itemize}
    \item [(1)] $\psi$ is a birational morphism, $\tilde{X}$ is smooth and $\tilde{f}$ is an algebraic fiber space. 
    \item[(2)] There exist a $\Q$-divisor $\Tilde{\Delta}$ such that $(\Tilde{X}, \Tilde{\Delta})$ is a $\Q$-factorial  dlt pair, and
    \[
 \psi_{*}(N(K_{\Tilde{X}} + \Tilde{\Delta})) = N(K_{X} + \Delta).
    \]
    
    \item [(3)] There exist a $\Q$-factorial dlt polarized pair $(\Tilde{Y}, \Delta_{\Tilde{Y}} + L_{\Tilde{Y}} )$ pair such that $K_{\Tilde{Y}}+ \Delta_{\Tilde{Y}} + L_{\Tilde{Y}}$ is big $/Y$.
    \item[(4)] There exist an effective $\Q$- divisor $R$ on $\Tilde{X}$ such that $\Tilde{f}_{*}(\mathcal{O}_{\Tilde{X}}(mR)) = \mathcal{O}_{\Tilde{Y}}$ for all $m \geq 0$, 
    \[
    K_{\Tilde{X}} + \Tilde{\Delta} \sim_{\Q} \Tilde{f}(K_{\Tilde{Y}}+ \Delta_{\Tilde{Y}} + L_{\Tilde{Y}}) + R \text{,}\]
    and 
    \[
    \tilde{f}_{*}(N(K_{\tilde{X}} + \tilde{\Delta})) = N(K_{\tilde{Y}}+ \Delta_{\tilde{Y}} + L_{\tilde{Y}}).
    \]
    \item [(5)] each component of $\lfloor \Delta_{\Tilde{Y}} \rfloor$ is dominated by a vertical component of $\lfloor\Tilde{\Delta}\rfloor$. 
    \end{itemize}
\end{theorem}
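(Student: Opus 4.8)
The statement packages the Fujino--Mori canonical bundle formula into a relative diagram, so the plan is to build that diagram in two stages: first a birational modification of $X$ reaching a log smooth (hence dlt) model, and then a construction of $\tilde{Y}$ as a model of the relative Iitaka fibration of $K_X+\Delta$ over $Y$, after which the canonical bundle formula is applied to the resulting fibration. To begin I would take a log resolution $\psi\colon \tilde{X}\to X$ of $(X,\Delta)$ with $\tilde{X}$ smooth and $\mathbb{Q}$-factorial, and set $\tilde{\Delta}$ to be the strict transform of $\Delta$ together with the reduced $\psi$-exceptional divisor. Then $(\tilde{X},\tilde{\Delta})$ is log smooth, hence $\mathbb{Q}$-factorial dlt, and since the discrepancy correction is effective and $\psi$-exceptional one checks that $\psi_{*}\big(N(K_{\tilde{X}}+\tilde{\Delta})\big)=N(K_X+\Delta)$ on the level of cycles and that the pushforwards of the associated sheaves agree; this yields (1) and (2). (If crepancy is wanted, a dlt modification in the sense of \cite{BCHM}, followed by an extra blow-up to smoothen, works equally well.)

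Next I would construct $\tilde{f}\colon\tilde{X}\to\tilde{Y}$ as a resolved model of the relative Iitaka fibration of $K_{\tilde{X}}+\tilde{\Delta}$ over $Y$: after a further birational modification I may assume $\tilde{Y}$ is smooth, that $\phi\colon\tilde{Y}\to Y$ is the induced morphism, and that $\tilde{f}$ is an algebraic fiber space whose general fiber $F$ satisfies $\kappa\big((K_{\tilde{X}}+\tilde{\Delta})|_{F}\big)=0$, so that $(K_{\tilde{X}}+\tilde{\Delta})|_{F}\sim_{\mathbb{Q}}0$ after the standard adjustment. Running a relative MMP for $K_{\tilde{X}}+\tilde{\Delta}$ over $Y$ (via \cite{BCHM}) lets me arrange that $\tilde{f}$ is the relative log canonical fibration, which is precisely what forces $K_{\tilde{Y}}+\Delta_{\tilde{Y}}+L_{\tilde{Y}}$ to be big over $Y$, giving the bigness in (3), and lets me take the polarized pair $\mathbb{Q}$-factorial dlt.

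I would then invoke the Fujino--Mori canonical bundle formula (\cite{fujinomori}) for the relatively $\mathbb{Q}$-trivial fibration $\tilde{f}$, which reads
\[
K_{\tilde{X}}+\tilde{\Delta}\sim_{\mathbb{Q}}\tilde{f}^{*}\big(K_{\tilde{Y}}+\Delta_{\tilde{Y}}+L_{\tilde{Y}}\big)+R,
\]
where $\Delta_{\tilde{Y}}$ is the discriminant part, defined fiberwise through log canonical thresholds and hence effective, $L_{\tilde{Y}}$ is the moduli part, which is nef so that $(\tilde{Y},\Delta_{\tilde{Y}}+L_{\tilde{Y}})$ is a polarized pair, and $R$ is effective and $\tilde{f}$-exceptional, yielding $\tilde{f}_{*}\mathcal{O}_{\tilde{X}}(mR)=\mathcal{O}_{\tilde{Y}}$ for all $m\geq 0$ and the equality of pushforwards asserted in (4). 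Property (5) is then the standard fact that the reduced part $\lfloor\Delta_{\tilde{Y}}\rfloor$ of the discriminant is supported exactly where the fibers of $\tilde{f}$ acquire log canonical singularities, so that each of its components is dominated by a vertical lc place of $(\tilde{X},\tilde{\Delta})$.

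The main obstacle is the construction, nefness and semipositivity of the moduli part $L_{\tilde{Y}}$ (together with its semi-ampleness after finite base change), which rests on the Hodge-theoretic input underlying the canonical bundle formula of Fujino--Mori, Kawamata and Ambro, and the simultaneous arrangement of the $\mathbb{Q}$-factorial dlt structure on $\tilde{Y}$, the bigness of $K_{\tilde{Y}}+\Delta_{\tilde{Y}}+L_{\tilde{Y}}$ over $Y$, and property (5). This coordination is exactly what requires the relative MMP and possibly a base change, and forms the delicate heart of the argument; by comparison the modification of $(X,\Delta)$ and the bookkeeping of pushforwards are routine.
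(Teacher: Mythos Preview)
The paper does not give a proof of this statement: it is stated in the Preliminaries section and simply attributed to \cite[Theorem 2.1]{haconxu} and \cite[Theorem 3.1]{Hu}. So there is no in-paper proof to compare against.

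Your sketch is broadly in line with how the cited references argue (log resolution/dlt modification, relative Iitaka fibration over $Y$, then the Fujino--Mori canonical bundle formula), and you correctly identify that the delicate points are the nefness of the moduli part and coordinating the $\mathbb{Q}$-factorial dlt structure with bigness over $Y$. One caution: you invoke \cite{BCHM} to run a relative MMP for the dlt pair $(\tilde{X},\tilde{\Delta})$, but \cite{BCHM} is a klt result and does not directly give termination for dlt pairs; the references you would actually need are the dlt MMP with scaling (Birkar, Hacon--Xu) together with special termination. Also, once you run an MMP the model is no longer smooth, so the smoothness of $\tilde{X}$ in (1) must be arranged \emph{after} the fibration is constructed, by a further resolution, rather than preserved through the MMP. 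Since the paper only quotes the result, these points do not affect anything downstream, but they are worth tightening if you intend the sketch to stand on its own.
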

\section{Non-vanishing and Chen-Jiang decomposition}
In the introduction, we highlight that the Albanese map and certain generic vanishing techniques can be used to produce sections of log pluricanonical bundles for irregular varieties. The key element is the application of the Chen-Jiang decomposition.
\begin{proof}[Proof of Theorem \ref{maintheorem1}]
We consider the Albanese morphism $\operatorname{alb}: X \to \operatorname{Alb}(X)$. If $\alpha(X) = n$, then the morphism $\operatorname{alb}$ is generically finite, and thus $\operatorname{alb}_{*}(D) \neq 0$. If $\alpha(X) < n$, we take the Stein factorization $f: X \to Y$ of $\operatorname{alb}$ and denote its general fiber by $F$. Clearly, the lower-dimensional pair $(F, \Delta_{|_{F}})$ is klt, and $Nm(K_{F} + \Delta|_{F})$ has a section for every positive integer $N$ which is sufficiently big and divisible by hypothesis. Thus, $f_{*}(Nm(K_{X} + \Delta)) \neq 0$, which implies $\operatorname{alb}_{*}(Nm(K_{X} + \Delta)) \neq 0$. By the decomposition results of \cite{jiang} and \cite{fanjunmeng}, we have 
\[
\operatorname{alb}_{*}(ND) = \bigoplus_{i \in I}(\alpha_{i} \otimes p_{i}^{*}\mathcal{F}_{i}) 
\]
The morphisms \( p_{i} : \operatorname{Alb}(X) \to A_{i} \) are algebraic fiber spaces, where \( A_{i} \) are abelian varieties, \( \mathcal{F}_{i} \) are nonzero M-regular coherent sheaves on \( A_{i} \), and \( \alpha_{i} \in \operatorname{Pic}^{0}(\operatorname{Alb}(X)) \) are torsion line bundles of finite orders. Choose any $\alpha_{j}$ in the decomposition, then 
\[
\operatorname{alb}_{*}(ND) \otimes \alpha_{j}^{-1} = p_{j}^{*}\mathcal{F}_{j} \bigoplus_{i \in I-\{j\}}(\alpha_{i} \otimes \alpha_{j}^{-1} \otimes p_{i}^{*}\mathcal{F}_{i}) \text{.}
\]
It is clear that any \(M\)-regular sheaf has a section. Indeed, by \cite{popapareschi1}, we know that any \(M\)-regular sheaf is continuously globally generated, which implies \(h^{0}(A_{j}, \mathcal{F}_{j} \otimes \alpha) \neq 0\) for general \(\alpha \in \operatorname{Pic}^{0}(\operatorname{Alb}(X))\). By semi-continuity, we then have \(h^{0}(A_{j}, \mathcal{F}_{j}) \neq 0\).  

Now, since \(p_{j}\) is an algebraic fiber space, it follows from the projection formula that \(h^{0}(\operatorname{Alb}(X), p_{j}^{*}\mathcal{F}_{j}) \neq 0\). Thus,  
\[
h^{0}(\operatorname{Alb}(X), \operatorname{alb}_{*}(ND) \otimes \alpha_{j}^{-1}) \neq 0,
\]  
which implies \(h^{0}(X, ND \otimes p_{j}^{*}\alpha_{j}^{-1}) \neq 0\).  

Assume that the order of \(\alpha_{j}^{-1}\) is \(k\). Then, \(h^{0}(X, kND) \neq 0\), which completes the proof of the theorem.  
\end{proof}
\begin{remark}
If \( \Delta = 0 \), then by \cite{Caopaun}, we know that the \( C_{n,m} \) conjecture is true for an algebraic fiber space over a variety with maximal Albanese dimension, and of course, we can deduce Theorem \ref{maintheorem1}. Also, for a klt pair \( (X, \Delta) \), the \( C_{n,m} \) conjecture for the same algebraic fiber space is satisfied by the work of Birkar and Chen \cite{BirkarChen}, but the proof involves many reduction steps, and we should use some technical extension theorems as given in \cite{demaillyhaconchen}.
\end{remark}
\begin{proof}[Proof of Corollary \ref{maintheorem2}]
We apply Theorem \ref{canonicalformula} to obtain the following commutatif diagram
\[
\begin{tikzcd}
\Tilde{X} \arrow[r, "\psi"] \arrow[d, "\Tilde{f}"] & X \arrow[d, "f= \operatorname{alb}"] \\
\Tilde{Y} \arrow[r, "\phi"] & Y = \operatorname{Alb}(X)
\end{tikzcd}
\]
such that the properties $(1), \dots, (5)$ are satisfied. Note by $P$ the vertical component of $\lfloor \tilde{\Delta}\rfloor$, since  each component of $\lfloor \Delta_{\Tilde{Y}} \rfloor$ is dominated by  $P$, it is clear that we can find a klt polarized pair $(\tilde{Y}, \Delta^{'} + L^{'})$ and a $\Q$-Cartier divisor $R^{'}$ such that $K_{\tilde{Y}} + \Delta^{'} + L^{'}$ is big $/Y$ and for some sufficiently small $\epsilon$ 
\[
K_{\Tilde{X}} + \Tilde{\Delta} -\epsilon P \sim_{\Q} \Tilde{f}(K_{\tilde{Y}} + \Delta^{'} + L^{'}) + R^{'} \text{,}
\]
and 
    \[
    \tilde{f}_{*}(Nm(K_{\tilde{X}} + \tilde{\Delta} - \epsilon P)) = Nm(K_{\tilde{Y}} + \Delta^{'} + L^{'}).
    \]
    (Without loss of generality, assume $Nm(K_{\tilde{X}} + \tilde{\Delta} - \epsilon P)$ and $Nm(K_{\tilde{Y}} + \Delta^{'} + L^{'})$ are Cartier).
    since $K_{\tilde{Y}} + \Delta^{'} + L^{'}$ is big $/Y$, we have
\[
K_{\tilde{Y}} + \Delta^{'} + L^{'} \sim_{\Q, \phi} M + E,
\]
where $M$ is an ample $\Q$-divisor on $\tilde{Y}$, and $E$ is effective. Then, for some $\delta > 0$, we can find $\Delta_\delta$ such that $(\tilde{Y}, \Delta_\delta)$ is klt and
\begin{equation}\label{eq0}
K_{\tilde{Y}} + \Delta_{\delta} \sim_{\mathbb{Q}, \phi} K_{\tilde{Y}} + \Delta^{'} + L^{'} + \delta E  + \delta M \sim_{\mathbb{Q}, \phi} (1 + \delta)(K_{\tilde{Y}} + \Delta^{'} + L^{'}).
\end{equation}
Here $K_{\tilde{Y}} + \Delta_{\delta}$ is big $/Y$, and for some $N$ which is sufficiently big and divisible, $Nm(K_{\tilde{Y}} + \Delta_{\delta})$ is Cartier. By Theorem \ref{maintheorem1}, $\kappa(K_{\tilde{Y}} + \Delta_{\delta}) \geq 0$, hence $\kappa(K_{\tilde{Y}} + \Delta^{'} + L^{'}) \geq 0$, thus  the sheaf $\tilde{f}_{*}(Nm(K_{\tilde{X}} + \tilde{\Delta} - \epsilon P))$ has a nonzero section, which implies that $Nm(K_{\Tilde{X}} + \Tilde{\Delta} -\epsilon P)$ has a section. Finally $Nm(K_{\Tilde{X}} + \Tilde{\Delta})$ and $Nm(K_X + \Delta)$ have a section.
\end{proof}
\begin{remark}
We do not know if the Chen-Jiang decomposition holds for a lc pair \((X, \Delta)\) since we miss a semi-positivity result for the pushforward of the log pluricanonical bundle. Otherwise, Corollary \ref{maintheorem2} would follow automatically without the use of any form of the canonical bundle formula. As we mentioned in the introduction, in \cite{jiang} and \cite{fanjunmeng}, the authors asked if the Chen-Jiang decomposition is satisfied for the pushfoward of a lc pairs. We remark that, by using the canonical bundle formula, we can see that \( f_{*}(Nm(K_{X} + \Delta)) \) contains a subsheaf that admits the Chen-Jiang decomposition for every positive integer \(N\) that is sufficiently large and divisible such that \(f_*(ND) \neq 0\).
 \end{remark}
\begin{proof}[Proof of Proposition \ref{maintheorem3}]
By assumption, \(\kappa(K_{F} + \Delta|_{F}) \geq 0\). Thus, we apply Theorem \ref{canonicalformula} to the pair $(X, \Delta)$, obtaining the following diagram:
\[
\begin{tikzcd}
\Tilde{X} \arrow[r, "\psi"] \arrow[d, "\Tilde{f}"] & X \arrow[d, "f"] \\
\Tilde{Y} \arrow[r, "\phi"] & Y=A
\end{tikzcd}
\]
such that the properties \((1), \dots, (5)\) are satisfied. By following the same steps as in the proof of Corollary \ref{maintheorem2}, we find that for some \(\delta > 0\), there exists \(\Delta_\delta\) such that \((\tilde{Y}, \Delta_\delta)\) is klt. Without loss of generality, we assume that the divisors \(Nm(K_{\tilde{Y}} + \Delta_{\delta})\), \(Nm(1 + \delta)(K_{\tilde{Y}} + \Delta' + L')\), and \(Nm(K_{\tilde{X}} + \tilde{\Delta} - \epsilon P)\) are Cartier. It is clear that the following torsion free sheaf \[(\phi \circ \tilde{f})_{*}(Nm(K_{\tilde{X}} +\tilde{\Delta} -\epsilon P)) =\phi_{*}(Nm(K_{\tilde{Y}} + \Delta_{\delta})) \neq 0 \text{,}\]
and it admits the Chen-Jiang decomposition. We know that the diagram above is commutative, thus 
\[
\mathcal{F} := (f \circ \psi)_{*}(Nm(K_{\tilde{X}} + \tilde{\Delta} - \epsilon P)) \neq 0,
\]
which admits the Chen-Jiang decomposition. Note that the sheaf \(\mathcal{F}\) is a torsion-free subsheaf of \(f_{*}(Nm(K_{X} + \Delta))\).
\end{proof}
\section{Catanese-Fujita-Kawamata decomposition}
As mentioned in the introduction, it is not clear how to produce sections for \(K_{X} + \Delta\) even if there exist some for \(K_{F} + \Delta_{|_{F}}\). In the case where the morphism is to an abelian variety, this is known from the discussion above and is also well known to experts. However, if the target space is not an abelian variety, it presents an obstruction. Exploring the Catanese-Fujita-Kawamata decomposition is therefore relevant to our purpose.
\begin{defn}[{\cite[Definition 1.1]{schnelllombardi}}]\label{decompositiondef}
A coherent torsion-free sheaf \(\mathcal{F}\) admits a Catanese–\\Fujita–Kawamata decomposition if it decomposes in the following form
\[
\mathcal{F} \cong \mathcal{U} \oplus \mathcal{A},
\]
where \(\mathcal{U}\) is a Hermitian flat bundle, and \(\mathcal{A}\) is either a generically ample sheaf or the zero sheaf.
\end{defn}

\par Recall the following theorem proven in \cite{schnelllombardi}.
\begin{theorem}[{\cite[Theorem 1.3]{schnelllombardi}}]\label{schnelltheorem}
Let \(\mathcal{F}\) be a coherent torsion-free sheaf on a smooth projective variety \(Y\), endowed with a singular Hermitian metric with semi-positive curvature and satisfying the minimal extension property. Then \(\mathcal{F}\) admits a Catanese–Fujita–Kawamata decomposition.    
\end{theorem}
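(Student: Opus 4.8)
I propose to prove Theorem \ref{schnelltheorem} by playing the two hypotheses against each other: the semi-positivity of the curvature controls the global positivity of $\mathcal{F}$ and singles out a canonical ``flat part,'' while the minimal extension property (MEP) upgrades a mere inclusion of sheaves into an orthogonal direct-sum splitting. To set up, let $Y_0 \subseteq Y$ be the locally free locus of $\mathcal{F}$, whose complement has codimension $\geq 2$, and let $Y_1 \subseteq Y_0$ be the dense open subset on which the metric $h$ is smooth and nondegenerate. A singular Hermitian metric with semi-positive curvature makes $\mathcal{F}$ weakly positive in Viehweg's sense; restricting to a general complete-intersection curve $C$ meeting $Y_1$, the bundle $\mathcal{F}|_{C}$ is nef, so all its Harder--Narasimhan slopes are $\geq 0$. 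This is the positivity input I would extract from the curvature.

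First I would construct the flat summand $\mathcal{U}$ as the subsheaf of $\mathcal{F}$ generated, over $Y_1$, by the holomorphic local sections of locally constant $h$-norm; semi-positivity of the curvature forces any such section to be parallel for the Chern connection, so $\mathcal{U}$ is a flat subbundle there. Equivalently, $\mathcal{U}$ is the maximal numerically flat subsheaf of $\mathcal{F}$, which is detected on the test curves as the slope-zero part. A numerically flat locally free sheaf on a projective variety underlies a unitary local system by the correspondence of Simpson \cite{simpson} (together with Narasimhan--Seshadri and Demailly--Peternell--Schneider on the test curves), so $\mathcal{U}$ carries a genuine Hermitian flat structure, matching the flat factor required in Definition \ref{decompositiondef}. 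The complementary quotient $\mathcal{A} := \mathcal{F}/\mathcal{U}$ then has strictly positive slope on general curves, and hence is either zero or generically ample, as needed.

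The heart of the argument, and the step I expect to be the main obstacle, is to promote the exact sequence $0 \to \mathcal{U} \to \mathcal{F} \to \mathcal{A} \to 0$ into a holomorphic splitting $\mathcal{F} \cong \mathcal{U} \oplus \mathcal{A}$ with $\mathcal{U}$ honestly Hermitian flat rather than merely numerically so. Here the MEP is essential. I would take the $h$-orthogonal complement $\mathcal{U}^{\perp}$ over $Y_1$ and prove it is a holomorphic subsheaf by showing that the second fundamental form of $\mathcal{U} \subseteq \mathcal{F}$ vanishes. The minimal extension property produces, through any chosen point and for any prescribed vector of minimal norm, a holomorphic section realizing that norm; since the flat sections of $\mathcal{U}$ already achieve the pointwise minimum and $|s|_{h}^{2}$ is plurisubharmonic for holomorphic $s$ under semi-positive curvature, a maximum-principle comparison forces the off-diagonal curvature term, equivalently the second fundamental form, to be identically zero. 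This yields the orthogonal decomposition over $Y_1$.

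Finally I would descend and verify intrinsicness. Both $\mathcal{U}$ and $\mathcal{A}$ are reflexive (being a flat bundle and the reflexive hull of a positive quotient), so the splitting obtained over $Y_1$ extends uniquely across the codimension-$\geq 2$ complement to a global decomposition on $Y$; the restricted metric is then flat on $\mathcal{U}$ and strictly positive on $\mathcal{A}$. Independence of the splitting from the auxiliary polarization and test curves follows because $\mathcal{U}$ is characterized intrinsically as the maximal Hermitian flat subsheaf. The subtle point to handle with care throughout is the torsion-free, rather than locally free, setting together with possible singularities of $h$ along a divisor: every curvature and MEP computation is carried out on $Y_1$, and it is reflexivity of both factors, combined with the metric-independent numerical characterization of the flat part, that lets the decomposition descend to all of $Y$.
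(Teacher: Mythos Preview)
The paper does not prove Theorem \ref{schnelltheorem}. It is quoted verbatim from \cite{schnelllombardi} (as the citation in the statement indicates) and used as a black box: the only place it appears is in the proof of Theorem \ref{maintheorem5}, where the author verifies the hypotheses and then simply invokes Theorem \ref{schnelltheorem} to conclude. There is therefore no proof in this paper against which your proposal can be compared.

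For what it is worth, your outline is in the right neighborhood but does not match the argument actually given in \cite{schnelllombardi}. There the authors do not build the flat part in one stroke as the subsheaf of sections with locally constant norm; instead they run an inductive peeling-off argument. The minimal extension property is used in the precise form that any surjection $\mathcal{F} \twoheadrightarrow \mathcal{O}_{Y}$ admits a splitting $\mathcal{O}_{Y} \hookrightarrow \mathcal{F}$ whose image is a flat direct summand; one then shows that if $\mathcal{F}$ is not already generically ample such a trivial quotient must exist, and iterates on the complement. Your step ``MEP plus a maximum-principle comparison forces the second fundamental form of $\mathcal{U} \subseteq \mathcal{F}$ to vanish'' is the place where your sketch is vaguest, and it is exactly the step that the original paper handles differently (and more carefully) via the trivial-quotient mechanism rather than a direct curvature computation on a large flat subbundle.
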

\par As an example for the previous theorem, in \cite{schnelllombardi} the authors deduced the decomposition theorem (Definition \ref{decompositiondef}) for $f_{*}(m(K_{X/Y}))$ where $f: X \to Y$ is a surjective morphsim between smooth varieties. We remark that the decomposition is satisfied for the klt case. 
\begin{proof}[Proof of Theorem \ref{maintheorem5}]
The proof is classical, we refer to \cite{pauntakayama} for such details. $Nm\Delta$ is Cartier, and by assumption  we have $f_{*}(Nm(K_{X/Y}+\Delta)) \neq 0$, then define the divisor 
\[
D_{N} := (Nm-1)K_{X/Y} + Nm\Delta 
\]
By \cite{pauntakayama}, the divisor above admits a singular hermitian metric with semi-positive curvature, and the following inclusion is generically an isomorphism
\[f_{*} \big( \mathcal{O}_{X}(K_{X/Y} + D_{N}) \otimes \mathcal{I}(h_{N}) \big) \hookrightarrow f_{*} \mathcal{O}_{X}(K_{X/Y} + D_{N})= f_{*}(Nm(K_{X/Y}+ \Delta)),\]
here $h_{N}$ is the associated metric to $D_{N}$ and $\mathcal{I}(h_{N})$ is the associated multiplier ideal sheaf for it. We know by the famous result of \cite{pauntakayama} (see also \cite{berndtsoonpaun}, \cite{haconpopaschnell}, \cite{horing}, \cite{paunsurvey}) 
that $f_{*} \big( \mathcal{O}_{X}(K_{X/Y} + D_{N}) \otimes \mathcal{I}(h_{N}) \big)$ admits a singular hermitian metric with semi-positive and satisfies the minimal extention property. The inclusion above is generically an isomorphism, thus by \cite[Proposition 2.2]{schnelllombardi}, the torsion free sheaf $f_{*} \mathcal{O}_{X}(K_{X/Y} + D_{N})= f_{*}(Nm(K_{X/Y}+ \Delta))$ is endowed with a singular hermitian metric with semi-positive curvature and satisfies the minimal extention property. Then we can apply Theorem \ref{schnelltheorem} to conclude.
\end{proof}
\begin{proof}[Proof of Corollary \ref{maintheorem6}]
By assumption $K_{X/Y} + \Delta + L$ is big $/Y$. Then  we have
\[
K_{X/Y} + \Delta + L \sim_{\Q, f} M + E,
\]
where $M$ is an ample $\Q$-divisor on $Y$, and $E$ is effective. Then, for some $\delta > 0$, we can find $\Delta_\delta$ such that $(X, \Delta_\delta)$ is klt and
\begin{equation}\label{eq0}
K_{X/Y} + \Delta_{\delta} \sim_{\mathbb{Q}, f} K_{X/Y} + \Delta + L + \delta E  + \delta M \sim_{\mathbb{Q}, f} (1 + \delta)(K_{X/Y} + \Delta + L).
\end{equation}
Then we apply Theorem \ref{maintheorem5} to deduce the decomposition.
\end{proof}
\par It is natural to ask whether the flat part or the generically ample part has a section. The author believes that a deeper understanding of the Catanese-Fujita-Kawamata decomposition is crucial for making progress on positivity problems.
\begin{proof}[Proof of Theorem \ref{maintheorem7}]
By assumption, \(h^{p}(F, (1-Nm)K_{F} - Nm\Delta_{F}) \neq 0\) is constant for every fiber \(F\), and the algebraic fiber space is flat. Then, by Grauert's theorem, we deduce that the coherent sheaf \(\mathcal{R}^{p}f_{*}((1-Nm)K_{X/Y} - Nm\Delta)\) is locally free. Note \[P_{N}:= h^{0}(Y, \mathcal{R}^{p}f_{*}((1-Nm)K_{X/Y} -Nm\Delta)) = h^{0}(Y, f_{*}(Nm(K_{X/Y}+ \Delta))^{\vee}),\]  since \[ f_{*}(Nm(K_{X/Y}+ \Delta))^{\vee}\simeq \mathcal{R}^{p}f_{*}((1-Nm)K_{X/Y} -Nm\Delta).\] We take $\{s_{1}, \dots, s_{P_{N}}\}$ as a basis of \[H^{0}(Y, \mathcal{R}^{p}f_{*}((1-Nm)K_{X/Y} -Nm\Delta)) \simeq Hom(\mathcal{O}_{Y}, \mathcal{R}^{p}f_{*}((1-Nm)K_{X/Y} -Nm\Delta)).\] Then $s_{1} \oplus \dots \oplus s_{P_{N}}$ defines a map
\[
s_{1} \oplus \dots \oplus s_{P_{N}}: \mathcal{O}_{Y}^{\oplus P_{N}} \longrightarrow \mathcal{R}^{p}f_{*}((1-Nm)K_{X/Y} -Nm\Delta)
\]
which yields the following short exact sequence
\[
0 \longrightarrow \mathcal{O}_{Y}^{\oplus P_{N}} \longrightarrow \mathcal{R}^{p}f_{*}((1-Nm)K_{X/Y} -Nm\Delta) \longrightarrow \mathcal{Q}_{N} \longrightarrow 0
\]
where $\mathcal{Q}_{N}$ is the quotient sheaf. By duality, we have
\[
0 \longrightarrow \mathcal{Q}_{N}^{\vee} \longrightarrow f_{*}(Nm(K_{X/Y}+ \Delta)) \longrightarrow \mathcal{O}_{Y}^{\oplus P_{N}} \longrightarrow 0 \text{.}
\]
We deduce from \cite[Theorem 26.4]{haconpopaschnell} that the last exact sequence splits since  the bundle $f_{*}(Nm(K_{X/Y}+ \Delta))$ admits a singular hermitian metric with semi-positive curvature and satisfies the minimal extension property
\end{proof}
\begin{proof}[Proof of Corollary \ref{maintheorem8}]
By assumption $\kappa(Y) \geq 0$, then for some positive integer $N$ which is sufficiently big and divisible, we have $NmK_{Y}$ is effective. By Theorem \ref{maintheorem7}, we know that $\mathcal{O}_{Y}^{\oplus P_{N}}$ is a sub-sheaf of $f_{*}(Nm(K_{X/Y}+ \Delta))$. Now, by the following multplication map 
\[
H^{0}(Y, f_{*}(Nm(K_{X/Y}+ \Delta))) \times H^{0}(Y, NmK_{Y}) \to H^{0}(Y, f_{*}(Nm(K_{X}+ \Delta))) \text{,}
\]
we produce sections for the torsion free sheaf $f_{*}(Nm(K_{X}+ \Delta))$, and of course for the divisor $Nm(K_{X}+ \Delta))$. Thus, we deduce that $\kappa(K_{X} + \Delta) \geq 0$. The second assertion is clear.
\end{proof}
\begin{proof}[Proof of Proposition \ref{maintheorem9}]
By hypothesis $q(Y) \geq 1$, thus $(\operatorname{alb} \circ f)_{*}(Nm(K_{X} + \Delta))$ has the Chen-Jiang decomposition if it is not zero, furthermore, we can find an isogeny $\phi: \tilde{A} \to \operatorname{Alb}(Y)$ such that $\phi^{*}((\operatorname{alb} \circ f)_{*}(m(K_{X} + \Delta)))$ is globally generated, and we have the following diagram
\[
\begin{tikzcd}
\Tilde{X} \arrow[r, "p"] \arrow[d, "\Tilde{f}"] & X \arrow[d, "f"] \\\Tilde{Y} \arrow[r, "q"] \arrow[d, "g"] & Y \arrow[d, "\operatorname{alb}"]\\
\Tilde{A} \arrow[r, "\phi"] & \operatorname{Alb}(Y)
\end{tikzcd}
\]
The maps $p: \Tilde{X} \to X$, $q: \Tilde{Y} \to Y$ and $\phi: \tilde{A} \to \operatorname{Alb}(Y)$ are finites. The morphism $\tilde{f} : \tilde{X} \to \tilde{Y}$ is a surjective morphism. Clearly, the pair $(\tilde{X}, \tilde{\Delta})$ is klt such that $K_{\tilde{X}} + \tilde{\Delta} := p^{*}(K_{X} + \Delta)$. Hence we deduce the result from the commutativity of the diagram above.
\end{proof}
\section{Viehweg's trick machinery}
\par In this section, we review some techniques introduced by Viehweg, commonly referred to as Viehweg’s trick machinery \cite{Viehweg} (see also \cite{kawamatacyclic}). He developed these techniques to make significant progress in studying the positivity of the direct image sheaf of the pluricanonical bundle, which has direct applications to the $C_{n,m}$ conjecture. For instance, this conjecture was resolved by Cao and Păun \cite{Caopaun} in the important case where the variety is fibred over an Abelian variety. They reduced the problem, à la Kawamata \cite{kawamataabelianvariety}, to the case where the variety has trivial Kodaira dimension over an Abelian variety. In this setting, they established a crucial positivity result for $f_{*}(mK_{X})$ (\cite{Caopaun}).
\par We are inspired from  \cite{lombardipopaschnell}, \cite{fanjunmeng}, \cite{popaschnell}, \cite{Viehweg} to deduce the followings.
\begin{theorem}\label{finitecov1}
Let $f: X \to Y$ be a surjective morphism, and let $(X, \Delta)$ be a klt pair such that $D \sim_{\Q} (K_{X/Y} + \Delta)$ is Cartier. Then there exist a smooth variety $Z$ with a generically finite map $h: Z \to X$ such that $f_{*}\mathcal{O}(D)$ is a direct summand of $g_{*}\omega_{Z/Y}:= (f\circ h)_{*}\omega_{Z/Y}$.
\end{theorem}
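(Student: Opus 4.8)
The plan is to realize $\mathcal{O}_X(D)$, after a twist by the relative dualizing sheaf, as one of the summands of a Viehweg cyclic cover branched along a suitable multiple of $\Delta$; the point is that the klt hypothesis forces the round‑down (equivalently, the multiplier ideal) that normally appears in that summand to be trivial, so we recover $\mathcal{O}_X(D)$ on the nose.

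\emph{Step 1: reduction to simple normal crossings.} First I would pass to a log resolution $\rho\colon X'\to X$ of $(X,\Delta)$ and write $\rho^*(K_X+\Delta)=K_{X'}+\Delta'$, where $\Delta'$ has SNC support and all coefficients $<1$ (since the pair is klt). Splitting $\Delta'=\Delta'_{+}-\Delta'_{-}$ into its effective positive and negative parts, $\Delta'_{-}$ is $\rho$-exceptional and effective, and $D':=\rho^*D+\lceil\Delta'_{-}\rceil$ is a Cartier divisor on $X'$ with $D'\sim_{\Q}K_{X'/Y}+\Delta''$, where $\Delta'':=\Delta'_{+}+\lceil\Delta'_{-}\rceil-\Delta'_{-}$ has all coefficients in $[0,1)$, so $(X',\Delta'')$ is klt with SNC support. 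Because $\Delta'_{-}$ is effective and $\rho$-exceptional, $\rho_*\mathcal{O}_{X'}(D')=\mathcal{O}_X(D)$, so a proof of the statement for $(X',\Delta'',D')$ descends to $(X,\Delta,D)$ after composing the covering map with $\rho$ and pushing forward. Hence I may assume $X$ smooth and $\Delta$ with SNC support.

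\emph{Step 2: cyclic cover and Viehweg's splitting.} Choose $r$ divisible enough that $r\Delta$ is integral and the $\Q$-linear equivalence $D-K_{X/Y}\sim_{\Q}\Delta$ lifts to an honest linear equivalence $r(D-K_{X/Y})\sim r\Delta=:B$. This yields a line bundle $\mathcal{L}:=\mathcal{O}_X(D-K_{X/Y})$ together with an isomorphism $\mathcal{L}^{\otimes r}\cong\mathcal{O}_X(B)$ and a section of $\mathcal{L}^{\otimes r}$ with zero divisor $B$. Let $\delta\colon W\to X$ be the associated normalized cyclic cover; since $B$ has SNC support, $W$ has quotient singularities, hence rational singularities in characteristic zero. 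Pick a resolution $\tau\colon Z\to W$ with $Z$ smooth, put $h:=\delta\circ\tau\colon Z\to X$ (which is generically finite) and $g:=f\circ h$. Grothendieck duality for the finite morphism $\delta$ together with $\tau_*\omega_Z=\omega_W$ gives Viehweg's formula \cite{Viehweg}
$$h_*\omega_Z\;\cong\;\bigoplus_{i=0}^{r-1}\Big(\omega_X\otimes\mathcal{L}^{\otimes i}\otimes\mathcal{O}_X\big(-\lfloor iB/r\rfloor\big)\Big).$$
As every coefficient of $\Delta$ lies in $[0,1)$ we have $\lfloor B/r\rfloor=\lfloor\Delta\rfloor=0$, so the $i=1$ summand is precisely $\omega_X\otimes\mathcal{L}=\mathcal{O}_X(D)\otimes f^*\omega_Y$.

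\emph{Step 3: conclusion and the main obstacle.} Thus $\mathcal{O}_X(D)\otimes f^*\omega_Y$ is a direct summand of $h_*\omega_Z$; twisting by $f^*\omega_Y^{-1}$ and applying the projection formula shows that $\mathcal{O}_X(D)$ is a direct summand of $h_*\omega_{Z/Y}$, and applying $f_*$ (which is additive, hence preserves direct summands) yields that $f_*\mathcal{O}_X(D)$ is a direct summand of $f_*h_*\omega_{Z/Y}=g_*\omega_{Z/Y}$. The step I expect to be the main obstacle is the bookkeeping in Step 1: one must correct $\rho^*D$ by the exceptional divisor $\lceil\Delta'_{-}\rceil$ so as to stay integral while simultaneously preserving the $\Q$-linear equivalence and the klt, SNC structure, and then check that the distinguished $i=1$ summand really descends along $\rho$; equivalently, this is the assertion that the multiplier ideal $\mathcal{J}(X,\Delta)$ is trivial and stays trivial under all push-forwards in play. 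Everything else — the choice of $r$, Viehweg's cyclic-cover computation (which one may alternatively invoke directly in its multiplier-ideal form, trivial here by klt-ness), and the final push-forward — is routine.
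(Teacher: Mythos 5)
Your proposal is correct and follows essentially the same route as the paper's proof: reduce to a log smooth klt pair via a log resolution (correcting $D$ by the round-up of the exceptional part so that pushforwards are preserved), take the Viehweg cyclic cover branched along an integral multiple of $\Delta$, identify $\mathcal{O}_X(D)$ (up to the $f^*\omega_Y$ twist) as the summand of $h_*\omega_Z$ whose round-down term vanishes by the klt condition, and push forward by $f$. The only cosmetic difference is that you single out the $i=1$ summand of the cyclic-cover decomposition, which makes the vanishing of $\lfloor \Delta\rfloor$ do the work directly, whereas the paper manipulates a higher summand to reach the same sheaf.
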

\begin{proof}
   It is enough to assume that $(X, \Delta)$ is a klt log smooth pair. Indeed, take a log resolution of $(X, \Delta)$, $\mu: \tilde{X}  \to X$ such that 
   \[
   K_{\tilde{X}/Y} + \Delta_{\tilde{X}} \sim_{\Q} \mu^{*}(K_{X/Y} + \Delta) +E \text{,}
   \]
   where $\Delta_{\tilde{X}}$ and $E$ are effective SNC and have no common components, $E$ is the exceptional divisor. Therefore, we have
   \[
   K_{\tilde{X}/Y} + \Delta_{\tilde{X}} +\lceil E\rceil - E \sim_{\Q} \mu^{*}(K_{X/Y} + \Delta) + \lceil E\rceil \text{.}
    \]
    We put $\Delta_{\tilde{X}}^{'}: = \Delta_{\tilde{X}} +\lceil E\rceil - E$, then  the pair $(\tilde{X}, \Delta_{\tilde{X}}^{'})$ is klt log smooth.
    \par Furthermore 
    \[
    (f\circ \mu)_{*}(K_{\tilde{X}/Y} + \Delta_{\tilde{X}}^{'}) = (f \circ \mu)_{*}(\mu^{*}D + \lceil E\rceil) =f_{*}(D) \text{.}
    \]
    Thus, we can work with the log smooth klt pair $(\tilde{X}, \Delta_{\tilde{X}}^{'})$, and therefore assume that $(X, \Delta)$ is klt log smooth.
\par Now, we take $N$ such that 
\[
N(D-K_{X/Y}) \sim N\Delta.
\]
Hence, we see that the Cartier divisor \(N(D - K_{X/Y})\) is divisible. Then, we can take a finite map \(h': Z' \to X\) ramified along \(N\Delta\). By resolving the singularities of \(Z'\), we obtain a generically finite map \(h: Z \to X\), with \(Z\) smooth, such that  
\[
h_{*}\omega_{Z} = \mathcal{O}_{X}(K_{X} + (N-1)(D- K_{X/Y})) \otimes \mathcal{O}_{X}(- \lfloor (N-1)\Delta \rfloor) \bigoplus \dots 
\]
\[
= \mathcal{O}_{X}((D- K_{X/Y} + K_{X}))\otimes \mathcal{O}_{X}( (N-2)(D-K_{X/Y}))\otimes \mathcal{O}_{X}(- \lfloor (N-1)\Delta \rfloor) \bigoplus \dots 
\]
But 
\[
\mathcal{O}_{X}( (N-2)(D-K_{X/Y}))\otimes \mathcal{O}_{X}(- \lfloor (N-1)\Delta \rfloor) = \mathcal{O}_{X} \text{.}
\]
Thus
\[
h_{*}\omega_{Z}= \mathcal{O}_{X}((D- K_{X/Y} + K_{X})) \bigoplus \dots
\]
Hence
\[
h_{*}\omega_{Z/Y}= \mathcal{O}_{X}(D) \bigoplus \dots
\]
Finally, we have 
\[
g_{*}\omega_{Z/Y}:= (f\circ h)_{*}\omega_{Z/Y} = f_{*}\mathcal{O}(D) \bigoplus  \dots
\]
as required.
\end{proof}
\begin{corollary}
Let $f: X \to Y$ be a surjective morphism, and let $(X, \Delta)$ be a klt pair such that $D \sim_{\Q} (K_{X/Y} + \Delta)$ is Cartier. Then $f_{*}\mathcal{O}_{X}(D)$ admits a Catanese-Fujita-Kawamata decomposition.    
\end{corollary}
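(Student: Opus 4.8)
The plan is to reduce the singular klt statement to the already-established smooth (or rather, semi-positivity-based) machinery by invoking the two main tools developed above: Theorem~\ref{finitecov1}, which realizes $f_{*}\mathcal{O}_{X}(D)$ as a direct summand of a pushforward of a relative canonical sheaf of a smooth variety, and Theorem~\ref{schnelltheorem} (via Theorem~\ref{maintheorem5}), which guarantees the Catanese--Fujita--Kawamata decomposition for such pushforwards. The key point is that being a direct summand of a sheaf admitting a CFK decomposition is itself enough to admit a CFK decomposition, provided one checks that the summand inherits the relevant analytic structure.

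First I would apply Theorem~\ref{finitecov1} to the surjective morphism $f: X \to Y$ and the klt pair $(X, \Delta)$ with $D \sim_{\Q} (K_{X/Y} + \Delta)$ Cartier, obtaining a smooth variety $Z$ with a generically finite map $h: Z \to X$ such that, setting $g := f \circ h: Z \to Y$, one has a direct sum decomposition
\[
g_{*}\omega_{Z/Y} = f_{*}\mathcal{O}_{X}(D) \oplus \mathcal{G}
\]
for some coherent sheaf $\mathcal{G}$ on $Y$. Next, applying Theorem~\ref{maintheorem5} (in the case $\Delta = 0$ on $Z$, i.e.\ the classical smooth case) to the algebraic fiber space part of $g$ — or more directly citing the smooth case of \cite{schnelllombardi}, \cite{pauntakayama} — the sheaf $g_{*}\omega_{Z/Y}$ is torsion-free, carries a singular Hermitian metric with semi-positive curvature, and satisfies the minimal extension property; hence by Theorem~\ref{schnelltheorem} it admits a Catanese--Fujita--Kawamata decomposition $g_{*}\omega_{Z/Y} = \mathcal{U} \oplus \mathcal{A}$.

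The remaining step is to transfer the decomposition to the direct summand $f_{*}\mathcal{O}_{X}(D)$. The natural projection $g_{*}\omega_{Z/Y} \twoheadrightarrow f_{*}\mathcal{O}_{X}(D)$ and the inclusion $f_{*}\mathcal{O}_{X}(D) \hookrightarrow g_{*}\omega_{Z/Y}$ exhibit $f_{*}\mathcal{O}_{X}(D)$ as the image of an idempotent endomorphism of $g_{*}\omega_{Z/Y}$. I would then argue that the singular Hermitian metric with semi-positive curvature and the minimal extension property descend to $f_{*}\mathcal{O}_{X}(D)$: this is the content of \cite[Proposition 2.2]{schnelllombardi} together with the fact that a direct summand of a sheaf with a semi-positively curved singular metric inherits such a metric by restriction, and the minimal extension property is stable under passing to direct summands (a decomposition into a Hermitian flat part and a generically ample part is compatible with splittings, since Hermitian flat bundles and generically ample sheaves both behave well under taking summands). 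With these properties in hand, a second application of Theorem~\ref{schnelltheorem}, now to $f_{*}\mathcal{O}_{X}(D)$ directly, yields the desired CFK decomposition.

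The main obstacle I anticipate is precisely the descent of the minimal extension property to the direct summand: while semi-positivity of curvature restricts to summands essentially formally, the minimal extension property is a more delicate pointwise statement about extending local sections with controlled norm, and one must verify that the orthogonal projection (with respect to the metric) of a minimal extension in $g_{*}\omega_{Z/Y}$ still has the required minimality in $f_{*}\mathcal{O}_{X}(D)$. If this turns out to be subtle, the cleaner route is to bypass it entirely: decompose $g_{*}\omega_{Z/Y} = \mathcal{U} \oplus \mathcal{A}$ via Theorem~\ref{schnelltheorem}, observe that the idempotent cutting out $f_{*}\mathcal{O}_{X}(D)$ must respect this decomposition up to isomorphism because $\operatorname{Hom}$ between a Hermitian flat bundle and a generically ample sheaf vanishes in the relevant direction, and conclude that $f_{*}\mathcal{O}_{X}(D)$ is itself a direct sum of a Hermitian flat summand of $\mathcal{U}$ and a generically ample (or zero) summand of $\mathcal{A}$ — which is exactly a Catanese--Fujita--Kawamata decomposition.
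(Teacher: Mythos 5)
Your proposal is correct and follows exactly the route the paper intends: the paper states this corollary without proof, as an immediate consequence of Theorem~\ref{finitecov1} combined with the smooth-case results of \cite{pauntakayama} and \cite{schnelllombardi} applied to $g_{*}\omega_{Z/Y}$. The one delicate point you rightly flag --- that the singular metric with semi-positive curvature and the minimal extension property pass to direct summands --- is handled in the cited literature, so your argument fills in precisely the details the paper leaves implicit.
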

\begin{theorem}\label{finitecover2}
Let $f: X \to Y$ be a surjective morphism, and let $(X, \Delta)$ be a klt pair such that $D \sim_{\Q} m(K_{X/Y} + \Delta)$ is Cartier for some $m>1$. If $f_{*}\mathcal{O}_{X}(D)$ is globally generated, then there exist a smooth variety $Z$ with a generically finite map $h: Z \to X$ such that $f_{*}\mathcal{O}(D)$ is a direct summand of $g_{*}\omega_{Z/Y}:= (f\circ h)_{*}\omega_{Z/Y}$.
\end{theorem}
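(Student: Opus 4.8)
The plan is to deduce Theorem~\ref{finitecover2} from the case $m=1$ already treated in Theorem~\ref{finitecov1}, by absorbing the extra factor into the boundary. Since $f_{*}\mathcal{O}_{X}(D)$ is globally generated and (we may assume) nonzero, $H^{0}(X,\mathcal{O}_{X}(D))=H^{0}(Y,f_{*}\mathcal{O}_{X}(D))\neq 0$, so $|D|$ is nonempty; pick an effective $B\in|D|$ and set $\Delta':=\Delta+\tfrac{m-1}{m}B$. Then $D$ is Cartier and, using $B\sim D\sim_{\Q}m(K_{X/Y}+\Delta)$,
\[
K_{X/Y}+\Delta'\sim_{\Q}K_{X/Y}+\Delta+\tfrac{m-1}{m}D\sim_{\Q}m(K_{X/Y}+\Delta)\sim_{\Q}D .
\]
If $(X,\Delta')$ is klt, then Theorem~\ref{finitecov1} applied to $f\colon X\to Y$, the pair $(X,\Delta')$ and the Cartier divisor $D\sim_{\Q}K_{X/Y}+\Delta'$ yields a smooth $Z$ and a generically finite $h\colon Z\to X$ with $f_{*}\mathcal{O}_{X}(D)$ a direct summand of $(f\circ h)_{*}\omega_{Z/Y}$, which is exactly the claim.

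So the work is to arrange $(X,\Delta')$ klt, and this is where global generation enters. A single general $B\in|D|$ need not do: its fixed part may contain components of multiplicity $\ge m/(m-1)$, which become coefficients $\ge 1$ after scaling by $\tfrac{m-1}{m}$. The point is that $f_{*}\mathcal{O}_{X}(D)$ being a quotient of $\mathcal{O}_{Y}^{\oplus N}$ controls $|D|$ along $f$, so one separates $B$ into a horizontal part, made reduced and transverse to $\Delta$ by a Bertini argument for the general member, and a vertical part, which is pulled back from $Y$ and can be absorbed via the projection formula $f_{*}(\mathcal{O}_{X}(D)\otimes f^{*}\mathcal{M})=f_{*}\mathcal{O}_{X}(D)\otimes\mathcal{M}$ — this only twists the conclusion by a line bundle on $Y$ and is harmless for direct summands. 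After a further log resolution of $(X,\Delta+B)$, which leaves $f_{*}\mathcal{O}_{X}(D)$ unchanged by the exceptional-divisor bookkeeping used at the start of the proof of Theorem~\ref{finitecov1}, one obtains a genuinely klt pair and applies Theorem~\ref{finitecov1}.

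The main obstacle is precisely producing, from the global generation hypothesis, an effective $\Q$-divisor $\Q$-linearly equivalent to $D$ on a model preserving $f_{*}\mathcal{O}_{X}(D)$ whose sum with $\Delta$ has all coefficients strictly below $1$; everything else is either formal or already contained in Theorem~\ref{finitecov1}. Note that $m>1$ is used only through the inequality $\tfrac{m-1}{m}<1$, which is what makes $\Delta'$ klt; when $m=1$ one recovers Theorem~\ref{finitecov1}, for which no hypothesis on $f_{*}\mathcal{O}_{X}(D)$ is needed at all.
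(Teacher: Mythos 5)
Your overall strategy is the same as the paper's: use global generation to produce an effective divisor $\mathbb{Q}$-linearly equivalent to $D$, scale it by $\tfrac{m-1}{m}$, add it to the boundary so that $D\sim_{\mathbb{Q}}K_{X/Y}+(\text{new boundary})$, and feed the result into the $m=1$ machinery of Theorem~\ref{finitecov1}. You also correctly locate the difficulty: keeping the new boundary klt. But your proposed resolution of that difficulty has a genuine gap. The fixed part of $|D|$ is, by global generation of $f_{*}\mathcal{O}_{X}(D)$, the same as the \emph{relative} base locus of $D$, and this locus can perfectly well contain horizontal components of high multiplicity (e.g.\ coming from base loci of $|D|_{F}|$ on the fibres). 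Bertini only improves the moving part of the linear system, so it cannot touch these components; and they are in general neither vertical nor of the form $f^{*}\mathcal{M}$, so the dichotomy ``horizontal part handled by Bertini, vertical part pulled back from $Y$'' on which your argument rests is false. Even where a piece of the fixed part \emph{is} a pullback, the projection-formula twist is not harmless: it would exhibit $f_{*}\mathcal{O}_{X}(D)\otimes\mathcal{M}^{-1}$, not $f_{*}\mathcal{O}_{X}(D)$, as a direct summand of some $g_{*}\omega_{Z/Y}$, which is a different statement from the one being proved.

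The ingredient you are missing, and which the paper uses, is that the relative fixed part can be \emph{subtracted} rather than absorbed: on a log resolution $\mu:\tilde{X}\to X$ with $k=f\circ\mu$ and $G=\mu^{*}D+\lceil mE\rceil$, the image of the evaluation map $k^{*}k_{*}\mathcal{O}_{\tilde{X}}(G)\to\mathcal{O}_{\tilde{X}}(G)$ is $\mathcal{O}_{\tilde{X}}(G-G')$ for an SNC divisor $G'$, and one has $k_{*}\mathcal{O}_{\tilde{X}}(G-G'')=k_{*}\mathcal{O}_{\tilde{X}}(G)$ for every effective $G''\leq G'$. Writing $G\sim_{\mathbb{Q}}K_{\tilde{X}/Y}+\Delta'_{\tilde{X}}+\tfrac{m-1}{m}H+\tfrac{m-1}{m}G'$ with $H$ a general (hence reduced, transverse) member of the free system $|G-G'|$, one then sets $T:=\lfloor\Delta'_{\tilde{X}}+\tfrac{m-1}{m}G'\rfloor\leq G'$ and replaces $G$ by $G-T\sim_{\mathbb{Q}}K_{\tilde{X}/Y}+M$ with $(\tilde{X},M)$ klt; the pushforward is unchanged, and Theorem~\ref{finitecov1} applies. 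Without this ``subtract the integral part of the offending coefficients without changing $f_{*}$'' step, your reduction does not go through.
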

\begin{proof}
We have the following evaluation map 
\[
f^{*}f_{*} \mathcal{O}_{X}(D) \to \mathcal{O}_{X}(D),
\]
and the image is $D \otimes \mathcal{I}$, where $\mathcal{I}$ is the relative base ideal of $D$. We take a log resolution of $(X, \Delta)$ and $I$, $\mu : \tilde{X} \to X$ such that 
\[
   K_{\tilde{X}/Y} + \Delta_{\tilde{X}} \sim_{\Q} \mu^{*}(K_{X/Y} + \Delta) +E \text{,}
   \]
    where $\Delta_{\tilde{X}}$ and $E$ are effective SNC and have no common components, $E$ is the exceptional divisor. We put $D_{\tilde{X}} := \mu^{*}D$, and $k: =f \circ \mu$. Then the image of the following evaluation map 
    \[
    k^{*}k_{*}\mathcal{O}_{X}(D_{\tilde{X}}) \to \mathcal{O}_{X}(D_{\tilde{X}})
    \]
    is of the form $\mathcal{O}_{X}(D_{\tilde{X}} -F)$ for some effective SNC divisor $F$. Hence, we define the new boundary divisor
    \[
    \Delta_{\tilde{X}}^{'}:= \Delta_{\tilde{X}} +\frac{\lceil mE\rceil}{m} - E,
    \]
    and clearly the pair $(\tilde{X}, \Delta_{\tilde{X}}^{'})$ is klt log smooth. Therefore
    \[
   m(K_{\tilde{X}/Y} + \Delta_{\tilde{X}}^{'}) \sim_{\Q} \mu^{*}(m(K_{X/Y} + \Delta)) +\lceil mE \rceil \sim_{\Q} \mu^{*}D + \lceil mE \rceil\text{.}
    \]
    Put $G: = \mu^{*}D + \lceil mE \rceil$. Then $k_{*}\mathcal{O}_{\tilde{X}}(G) = f_{*}\mathcal{O}_{X}(D)$, and $k_{*}\mathcal{O}_{\tilde{X}}(G)$ is globally generated. By above, the image of the following evaluation map 
    \[
    k^{*}k_{*}\mathcal{O}_{\tilde{X}}(G) \to \mathcal{O}_{\tilde{X}}(G)
    \]
    is $\mathcal{O}_{\tilde{X}}(G-F - \lceil mE \rceil)$. We define the divisor $G^{'}: = F + \lceil mE \rceil$, and as it pointed in \cite{fanjunmeng}, we have $k_{*}(\mathcal{O}_{\tilde{X}}(G-G^{"})) = k_{*}(\mathcal{O}_{\tilde{X}}(G))$, for any effective divisor $G^{"} \leq G^{'}$.
    \par Since $k_{*}\mathcal{O}_{\tilde{X}}(G)$ is globally generated, then $\mathcal{O}_{\tilde{X}}(G-G^{'})$ is globally generated, and by Bertini's  theorem we can take an effective divisor $H \sim_{\Q} G-G^{'}$, $H$ and $\Delta_{\tilde{X}}^{'} + G^{'}$ have no comments components, with $H + \Delta_{\tilde{X}}^{'} + G^{'}$ is SNC.
\par Now, the goal is to reduce to the structure as in  Theorem \ref{finitecov1}, we can find a new divisor $T \leq G^{'}$ and a klt pair $(\tilde{X}, M)$ such that \[
G-T \sim_{\Q} K_{\tilde{X}/Y} + M,
\]
$T$ and $M$ are given by 
\[
T:= \lfloor \Delta_{\tilde{X}}^{'} + \frac{m-1}{m}G^{'} \rfloor \text{,}
\]
and 
\[
M:= \frac{m-1}{m}H + \Delta_{\tilde{X}}^{'} + \frac{m-1}{m}G^{'} - T
\]
as proven in \cite{fanjunmeng}. The last step is to find a smooth variety $Z$ and a generically finite map $h: Z \to X$ such that $f_{*}\mathcal{O}(D)$ is a direct summand of $g_{*}\omega_{Z/Y}:= (f\circ h)_{*}\omega_{Z/Y}$. The details are the same of Theorem $\ref{finitecov1}$, so we will leave them to the readers.
\end{proof}
\begin{remark}
In Theorem \ref{finitecover2}, if \( f_{*}\mathcal{O}_{X}(D) \) is not globally generated, we can twist the bundle with a sufficiently ample line bundle \( L \) on \( Y \) to ensure that \( f_{*}\mathcal{O}_{X}(D) \otimes L \) is globally generated on \( Y \). In this case, we can prove a statement similar to Theorem \ref{finitecover2}; that is, we can find a smooth variety \( Z \) and a generically finite map \( Z \to X \) such that \( f_{*}\mathcal{O}_{X}(D) \otimes L \) is a direct summand of \( g_{*}\omega_{Z/Y} := (f \circ h)_{*}\omega_{Z/Y} \). All of these observations provide a way for obtaining the Catanese-Fujita-Kawamata decomposition in the logarithmic case.
\end{remark}
\bibliographystyle{plain}
\bibliography{slope paper}

\begin{thebibliography}{10}

\bibitem{berndtsoonpaun}
Bo~Berndtsson and Mihai P\u~aun.
\newblock Bergman kernels and the pseudoeffectivity of relative canonical bundles.
\newblock {\em Duke Math. J.}, 145(2):341--378, 2008.

\bibitem{BCHM}
Caucher Birkar, Paolo Cascini, Christopher~D. Hacon, and James McKernan.
\newblock Existence of minimal models for varieties of log general type.
\newblock {\em J. Amer. Math. Soc.}, 23(2):405--468, 2010.

\bibitem{BirkarChen}
Caucher Birkar and Jungkai~Alfred Chen.
\newblock Varieties fibred over abelian varieties with fibres of log general type.
\newblock {\em Adv. Math.}, 270:206--222, 2015.

\bibitem{Birkarandhu}
Caucher Birkar and Zhengyu Hu.
\newblock Log canonical pairs with good augmented base loci.
\newblock {\em Compos. Math.}, 150(4):579--592, 2014.

\bibitem{Caopaun}
Junyan Cao and Mihai P\u~aun.
\newblock Kodaira dimension of algebraic fiber spaces over abelian varieties.
\newblock {\em Invent. Math.}, 207(1):345--387, 2017.

\bibitem{catanese2}
Fabrizio Catanese and Michael Dettweiler.
\newblock Answer to a question by {F}ujita on variation of {H}odge structures.
\newblock In {\em Higher dimensional algebraic geometry---in honour of {P}rofessor {Y}ujiro {K}awamata's sixtieth birthday}, volume~74 of {\em Adv. Stud. Pure Math.}, pages 73--102. Math. Soc. Japan, Tokyo, 2017.

\bibitem{catanese3}
Fabrizio Catanese and Yujiro Kawamata.
\newblock Fujita decomposition over higher dimensional base.
\newblock {\em Eur. J. Math.}, 5(3):720--728, 2019.

\bibitem{chenjiang}
Jungkai~Alfred Chen and Zhi Jiang.
\newblock Positivity in varieties of maximal {A}lbanese dimension.
\newblock {\em J. Reine Angew. Math.}, 736:225--253, 2018.

\bibitem{demaillyhaconchen}
Jean-Pierre Demailly, Christopher~D. Hacon, and Mihai P\u~aun.
\newblock Extension theorems, non-vanishing and the existence of good minimal models.
\newblock {\em Acta Math.}, 210(2):203--259, 2013.

\bibitem{fujinomori}
Osamu Fujino and Shigefumi Mori.
\newblock A canonical bundle formula.
\newblock {\em J. Differential Geom.}, 56(1):167--188, 2000.

\bibitem{fujitapaper}
Takao Fujita.
\newblock The sheaf of relative canonical forms of a {K}\"ahler fiber space over a curve.
\newblock {\em Proc. Japan Acad. Ser. A Math. Sci.}, 54(7):183--184, 1978.

\bibitem{Greenlazarsfeld}
Mark Green and Robert Lazarsfeld.
\newblock Deformation theory, generic vanishing theorems, and some conjectures of {E}nriques, {C}atanese and {B}eauville.
\newblock {\em Invent. Math.}, 90(2):389--407, 1987.

\bibitem{haconpopaschnell}
Christopher Hacon, Mihnea Popa, and Christian Schnell.
\newblock Algebraic fiber spaces over abelian varieties: around a recent theorem by {C}ao and {P}\u aun.
\newblock In {\em Local and global methods in algebraic geometry}, volume 712 of {\em Contemp. Math.}, pages 143--195. Amer. Math. Soc., [Providence], RI, [2018] \copyright 2018.

\bibitem{Haconreine}
Christopher~D. Hacon.
\newblock A derived category approach to generic vanishing.
\newblock {\em J. Reine Angew. Math.}, 575:173--187, 2004.

\bibitem{haconxu}
Christopher~D. Hacon and Chenyang Xu.
\newblock Existence of log canonical closures.
\newblock {\em Invent. Math.}, 192(1):161--195, 2013.

\bibitem{horing}
Andreas H\"{o}ring.
\newblock On a conjecture of {B}eltrametti and {S}ommese.
\newblock {\em J. Algebraic Geom.}, 21(4):721--751, 2012.

\bibitem{Hu}
Zhengyu Hu.
\newblock Log canonical pairs over varieties with maximal {A}lbanese dimension.
\newblock {\em Pure Appl. Math. Q.}, 12(4):543--571, 2016.

\bibitem{jiang}
Zhi Jiang.
\newblock M-regular decompositions for pushforwards of pluricanonical bundles of pairs to abelian varieties.
\newblock {\em Int. Math. Res. Not. IMRN}, (13):9708--9721, 2022.

\bibitem{kawamataabelianvariety}
Yujiro Kawamata.
\newblock Characterization of abelian varieties.
\newblock {\em Compositio Math.}, 43(2):253--276, 1981.

\bibitem{kawamatacyclic}
Yujiro Kawamata.
\newblock Minimal models and the {K}odaira dimension of algebraic fiber spaces.
\newblock {\em J. Reine Angew. Math.}, 363:1--46, 1985.

\bibitem{kollarmori}
J\'anos Koll\'ar and Shigefumi Mori.
\newblock {\em Birational geometry of algebraic varieties}, volume 134 of {\em Cambridge Tracts in Mathematics}.
\newblock Cambridge University Press, Cambridge, 1998.
\newblock With the collaboration of C. H. Clemens and A. Corti, Translated from the 1998 Japanese original.

\bibitem{lombardipopaschnell}
Luigi Lombardi, Mihnea Popa, and Christian Schnell.
\newblock Pushforwards of pluricanonical bundles under morphisms to abelian varieties.
\newblock {\em J. Eur. Math. Soc. (JEMS)}, 22(8):2511--2536, 2020.

\bibitem{schnelllombardi}
Luigi Lombardi and Christian Schnell.
\newblock Singular hermitian metrics and the decomposition theorem of {C}atanese, {F}ujita, and {K}awamata.
\newblock {\em Proc. Amer. Math. Soc.}, 152(1):137--146, 2024.

\bibitem{fanjunmeng}
Fanjun Meng.
\newblock Pushforwards of klt pairs under morphisms to abelian varieties.
\newblock {\em Math. Ann.}, 380(3-4):1655--1685, 2021.

\bibitem{Mukai}
Shigeru Mukai.
\newblock Duality between {$D(X)$} and {$D(\hat X)$} with its application to {P}icard sheaves.
\newblock {\em Nagoya Math. J.}, 81:153--175, 1981.

\bibitem{popapareschi1}
Giuseppe Pareschi and Mihnea Popa.
\newblock Regularity on abelian varieties. {I}.
\newblock {\em J. Amer. Math. Soc.}, 16(2):285--302, 2003.

\bibitem{popasurvey}
Giuseppe Pareschi and Mihnea Popa.
\newblock {$M$}-regularity and the {F}ourier-{M}ukai transform.
\newblock {\em Pure Appl. Math. Q.}, 4(3):587--611, 2008.

\bibitem{popapareschi}
Giuseppe Pareschi and Mihnea Popa.
\newblock Regularity on abelian varieties {III}: relationship with generic vanishing and applications.
\newblock In {\em Grassmannians, moduli spaces and vector bundles}, volume~14 of {\em Clay Math. Proc.}, pages 141--167. Amer. Math. Soc., Providence, RI, 2011.

\bibitem{popaschnell}
Mihnea Popa and Christian Schnell.
\newblock On direct images of pluricanonical bundles.
\newblock {\em Algebra Number Theory}, 8(9):2273--2295, 2014.

\bibitem{paunsurvey}
Mihai P\u~aun.
\newblock Singular {H}ermitian metrics and positivity of direct images of pluricanonical bundles.
\newblock In {\em Algebraic geometry: {S}alt {L}ake {C}ity 2015}, volume 97.1 of {\em Proc. Sympos. Pure Math.}, pages 519--553. Amer. Math. Soc., Providence, RI, 2018.

\bibitem{pauntakayama}
Mihai P\u~aun and Shigeharu Takayama.
\newblock Positivity of twisted relative pluricanonical bundles and their direct images.
\newblock {\em J. Algebraic Geom.}, 27(2):211--272, 2018.

\bibitem{simpson}
Carlos Simpson.
\newblock Subspaces of moduli spaces of rank one local systems.
\newblock {\em Ann. Sci. \'Ecole Norm. Sup. (4)}, 26(3):361--401, 1993.

\bibitem{Viehweg}
Eckart Viehweg.
\newblock Weak positivity and the additivity of the {K}odaira dimension for certain fibre spaces.
\newblock In {\em Algebraic varieties and analytic varieties ({T}okyo, 1981)}, volume~1 of {\em Adv. Stud. Pure Math.}, pages 329--353. North-Holland, Amsterdam, 1983.

\end{thebibliography}
\end{document}